\newcommand{\red}{\textcolor[rgb]{1.00,0.00,0.00}}
\newtheorem{theorem}{Theorem}[section]
\newtheorem{remark}{Remark}[section]
\newtheorem{proposition}{Proposition}[section] 
\newtheorem{corollary}{Corollary}[section] 
\newcommand{\remove}[1]{}
\newenvironment{proof}
{\begin{trivlist}
\item[\hspace{\labelsep}{\bf\noindent Proof. }]}
{$\hfill\Box$\end{trivlist}}
\title{\huge\bf 
Some results on the telegraph process confined by two non-standard boundaries\thanks{
To appear on {\bf Methodology and Computing in Applied Probability}. 
}} 
\author{
\bf Antonio Di Crescenzo\thanks{Dipartimento di Matematica,
Universit\`a di Salerno, Via Giovanni Paolo II n.\ 132, 84084
Fisciano (SA), Italy. e-mail: \texttt{adicrescenzo@unisa.it, bmartinucci@unisa.it, pparaggio@unisa.it}}
\and
\bf Barbara Martinucci$^\dag$ 
\and
\bf Paola Paraggio$^\dag$ 
\and
\bf Shelemyahu Zacks\thanks{Department of Mathematical Sciences, Binghamton University, 
Binghamton, NY 13902-6000, USA. e-mail: \texttt{shelly@math.binghamton.edu}}
}
\begin{document}
\maketitle

\begin{abstract}
We analyze the one-dimensional telegraph random process confined by two boundaries, 0 and $H>0$. 
The process experiences hard reflection at the boundaries (with random switching to full absorption). 
Namely, 
when the process hits the origin (the threshold $H$) it is either absorbed, with probability $\alpha$, or reflected 
upwards (downwards), with probability $1-\alpha$, for $0<\alpha<1$. We provide various results on the 
expected values of the renewal cycles and of the absorption time. The adopted approach is based on 
the analysis of the first-crossing times of a suitable compound Poisson process through linear boundaries.
Our analysis includes also some comparisons between suitable stopping times of the considered telegraph 
process and of the corresponding diffusion process obtained under the classical Kac's scaling conditions.

\medskip\noindent
\emph{MSC:} 
60K15,  
60J25 
\\
\emph{Keywords:} finite velocity random motion; telegraph process; two-boundary problem; 
absorption time; renewal cycle.
\end{abstract}
%
%
\section{Introduction}
%
The (integrated) telegraph process describes the random motion of a particle running upward or downward alternately with finite velocity. 
The particle motion reverses its direction at the random epochs of a homogeneous Poisson process. The probability law of the telegraph process   
is governed by a hyperbolic partial differential equation (the telegraph equation), which is  widely encountered in mathematical physics. 
Moreover, such process deserves interest in many other applied fields, such as finance,  biology and mathematical geosciences. 
\par
Since the seminal papers by Goldstein \cite{Goldstein} and Kac \cite{Kac}, many generalizations of the telegraph process have been proposed in the 
literature, such as the asymmetric telegraph process (cf.\ \cite{Beghin}, \cite{Lopez}), the generalized telegraph process 
(see, for instance, \cite{Bshouty}, \cite{CDCIMa2013}, \cite{DCMa2007}, \cite{DCMa2010}, \cite{Pogorui}, \cite{Zacks1}) or the jump-telegraph process (for example, \cite{DCMa2011}, \cite{DCMaIuZacks2013}, \cite{Lopez_Ratanov}, \cite{Ratanov3}, \cite{Ratanov2}). Other recent investigations have been devoted to suitable functionals of telegraph processes 
(cf. \cite{Kolesnik1}, \cite{Kolesnik2} and \cite{Martinucci}). 
See also Tilles and Petrovskii \cite{Tilles} for recent results on the reaction-telegraph process. 
A modern and exhaustive treatment of the one-dimensional telegraph process is provided  in the books by Kolesnik and Ratanov \cite{Ratanov} and Zacks \cite{Zacks}.
\par
The telegraph process and its numerous generalizations have  been largely studied in an unbounded space. Anyway, a certain interest to the effects of boundaries on the 
telegraph motion can be found in the literature. This interest mainly derives from the need to model physical systems in the presence of a variety 
of complex conditions (see, for instance, Ishimaru \cite{Ishimaru} for potential applications in the medical area of the telegraph equation in the presence of boundaries). 
The explicit distribution of the telegraph process subject to a reflecting or an absorbing barrier has been obtained in Orsingher \cite{Orsingher}, see also the related results given in Foong  and Kanno \cite{Foong}. Moreover, the treatment of a one-dimensional telegraph equation with either reflecting or partly reflecting boundary conditions has been discussed in Masoliver  {\em et al.}\ \cite{Masoliver}. See also the Chapter $3$ of Kolesnik and Ratanov \cite{Ratanov} for a wide review on the main results concerning 
the telegraph process in the presence of reflecting or absorbing boundaries.  
\par
The analysis of stochastic processes subject to 
non-standard boundaries is the object of investigation in various research areas. 
We are interested in processes that exhibit hard reflection at the boundaries (with random switching 
to full absorption). This behavior has been expressed in the past in terms of `elastic boundaries' 
(see Feller \cite{Feller} and Bharucha-Reid \cite{Bharucha}, for instance), however 
this terminology is used nowadays for different purposes. 
Some examples of papers concerning diffusion processes in the presence of 
such type of 
boundaries are provided by Giorno {\em et al.}\ \cite{Giorno2006} and Domin\'e \cite{Domine95}, \cite{Domine96}. 
See also Veestraeten \cite{Veestraeten} and Buonocore {\em et al.}\ \cite{Buonocore2002} 
for some applications of these processes in the areas of mathematical neurobiology and medicine. 
See also Bobrowski  \cite{Bobrowski} for a novel interpretation of some of the problems associated 
with telegraph processes and for a detailed description of the elastic Brownian motions.  
\par
The analysis of the telegraph process in the presence of an elastic boundary is a quite new research 
topic. Various results on the related absorption time and renewal cycles have been 
obtained by Di Crescenzo {\em et al.}\ \cite{DiCrescenzo}. A similar problem has been investigated 
by Smirnov \cite{Smirnov},  where a telegraph equation confined by two endpoints
is studied from an analytical point of view. 
\par
Stimulated by the above mentioned researches, in this paper we aim to investigate the problem of 
the telegraph process in the presence of two 
non-standard boundaries, say 0 and $H>0$.  
Specifically, if the process reaches 
any of the two boundaries, it can be absorbed, 
with probability $\alpha$, or reflected upwards (downwards), with probability $1-\alpha$, with $0<\alpha<1$.
As limit cases, one has pure reflecting behavior for $\alpha=0$ and pure absorbing behavior for $\alpha=1.$
Furthermore, we point out that the analysis of the absorption phenomenon deserves interest in many applied fields, 
such as chemistry and physics (see, for instance, Bohren and Huffman  \cite{Bohren} and L\"uders and Pohl \cite{Luders}). 
\par
Special attention is given to the explicit determination of the probabilities that the process hits a given 
boundary starting from each of the two endpoints. In this setting, the typical sample-paths of the motion  
can be analyzed under four different phases, according to the choices of the initial and the final endpoints.  
Moreover, we aim to determine the expected duration of the renewal cycles concerning the four phases, 
and the expected time till the absorption in one of the boundaries. 
In analogy with the approach exploited in Zacks \cite{Zacks}, in this paper we follow the methodology based 
on (i) the construction of a suitable compound Poisson process, say $Y(t)$, which provide useful information on the 
upward and downward random times of the particle motion, and (ii) the determination of  the mean first-passage-time 
of $Y(t)$ through linear boundaries.  
\par
Recalling the well-known 
limit behavior of the telegraph process under the Kac's scaling conditions leading to 
a Wiener process (c.f.\ Kolesnik and Ratanov \cite{Ratanov}), we also perform some comparisons 
between suitable stopping times of the telegraph process and of the corresponding diffusion process. 
\par
Let us now describe the content of the paper. In Section \ref{sec2} we  deal with the stochastic model 
concerning the telegraph motion in the presence of the 
specified boundaries 0 and $H$. Then, we introduce  the four phases, 
during which the particle starts from one endpoint and first hits one of the boundaries. 
In Section \ref{secStopping} we define the above-mentioned compound Poisson process $Y(t)$, 
and introduce the main quantities of interest related to the phases of the motion. Then, 
the first-hitting probabilities of the boundaries are determined in Section \ref{Sec4}, whereas 
Section \ref{Sec5} deals with the expected values of the first-passage-times. Finally, 
in Section \ref{Sec6} we study the expected time till absorption in one of the two boundaries. 

\section{The Stochastic Model}\label{sec2}
%
Let $\{X(t); t\geq 0\}$ be a one-dimensional telegraph process defined on a probability space
$(\Omega ,{\mathcal {F}},\mathbb {P} )$, and assume that it is confined between  
two  boundaries, one at $0$ and the other one at the level $H>0$. 
This process describes the motion of a particle over the state space $[0, H]$, starting from the origin at the initial time $t=0$. 
The particle moves upward and downward alternately with fixed velocity $1$. 
The motion proceeds upward for a positive random time $U_1$, then it moves downward for a positive random time $D_1$, and so on. 
We assume that $\{U_i\}_{i\in \mathbb{N}}$ and $\{D_i\}_{i\in \mathbb{N}}$ are independent sequences of i.i.d.\ random times. 
When the particle hits the origin or the threshold $H$ it is either absorbed, with probability $\alpha$, or reflected upwards (downwards), with probability $1-\alpha$, with $0<\alpha<1$. Specifically, if during a downward (upward) period, say $D_j$ ($U_j$), the particle reaches the origin (the threshold $H$) and is not absorbed, then instantaneously 
the motion restarts with positive (negative) velocity, according to an independent random time $U_{j+1}$ ($D_{j+1}$). A sample path of $X(t)$ is shown in Figure 1 where 
$D_j^*$ ($U_j^*$) denotes the downward (upward) random period $D_j$ ($U_j$) truncated by the occurrence of the visit at the origin (level $H$).
%
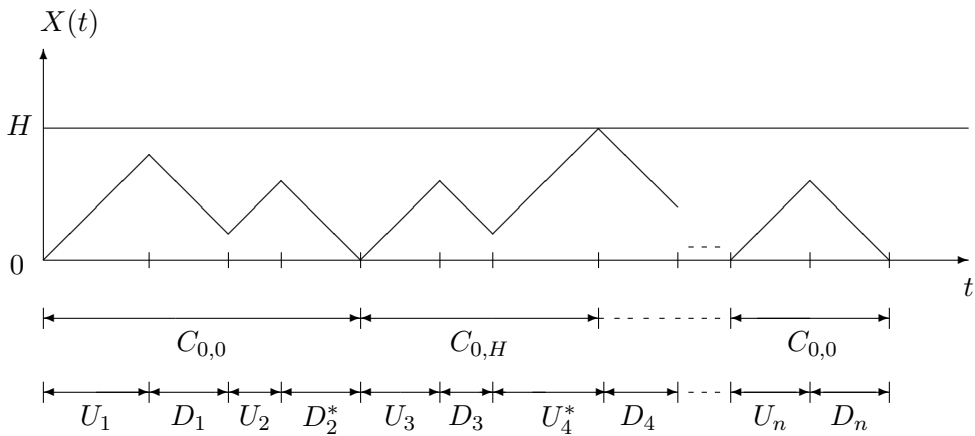
\begin{figure}[t]\label{fig:1}
\begin{center}
\hspace*{-1.8cm}
\begin{picture}(331,156) 
\put(20,70){\vector(1,0){350}} 
\put(20,120){\line(1,0){350}} 
\put(20,70){\vector(0,1){80}} 
\put(10,150){\makebox(40,15)[t]{$X(t)$}} 
\put(355,48){\makebox(30,15)[t]{$t$}} 
\put(0,57){\makebox(20,15)[t]{0}} 
\put(1,110){\makebox(20,15)[t]{$H$}} 
%
\put(20,70){\line(1,1){40}} 
\put(60,110){\line(1,-1){30}} 
\put(90,80){\line(1,1){20}} 
\put(110,100){\line(1,-1){30}} 
\put(140,70){\line(1,1){30}} 
\put(170,100){\line(1,-1){20}} 
%
%
\put(190,80){\line(1,1){40}}
%
%
\put(230,120){\line(1,-1){30}} 
\put(280,70){\line(1,1){30}} 
\put(310,100){\line(1,-1){30}} 
\put(60,67){\line(0,1){6}} 
\put(90,67){\line(0,1){6}} 
\put(110,67){\line(0,1){6}} 
\put(140,67){\line(0,1){6}} 
\put(170,67){\line(0,1){6}} 
\put(190,67){\line(0,1){6}} 
\put(230,67){\line(0,1){6}} 
\put(260,67){\line(0,1){6}} 
\put(280,67){\line(0,1){6}} 
\put(310,67){\line(0,1){6}} 
\put(340,67){\line(0,1){6}} 
\put(20,44){\line(0,1){8}} 
\put(140,44){\line(0,1){8}} 
\put(230,44){\line(0,1){8}} 
\put(280,44){\line(0,1){8}} 
\put(340,44){\line(0,1){8}} 
\put(70,48){\vector(-1,0){50}} 
\put(70,48){\vector(1,0){70}} 
\put(190,48){\vector(-1,0){50}} 
\put(190,48){\vector(1,0){40}} 
\put(310,48){\vector(-1,0){30}} 
\put(290,48){\vector(1,0){50}} 
\put(20,16){\line(0,1){8}} 
\put(60,16){\line(0,1){8}} 
\put(90,16){\line(0,1){8}} 
\put(110,16){\line(0,1){8}} 
\put(140,16){\line(0,1){8}} 
\put(170,16){\line(0,1){8}} 
\put(190,16){\line(0,1){8}} 
\put(232,16){\line(0,1){8}} 
\put(260,16){\line(0,1){8}} 
\put(280,16){\line(0,1){8}} 
\put(310,16){\line(0,1){8}} 
\put(340,16){\line(0,1){8}} 
\put(264,75){\line(1,0){2}} 
\put(269,75){\line(1,0){2}} 
\put(274,75){\line(1,0){2}} 
\put(264,20){\line(1,0){2}} 
\put(269,20){\line(1,0){2}} 
\put(274,20){\line(1,0){2}} 
\put(230,48){\line(1,0){2}} 
\put(236,48){\line(1,0){2}} 
\put(241,48){\line(1,0){2}} 
\put(247,48){\line(1,0){2}} 
\put(253,48){\line(1,0){2}} 
\put(259,48){\line(1,0){2}} 
\put(264,48){\line(1,0){2}} 
\put(269,48){\line(1,0){2}} 
\put(274,48){\line(1,0){2}} 
\put(60,20){\vector(-1,0){40}} 
\put(40,20){\vector(1,0){20}} 
\put(85,20){\vector(-1,0){25}} 
\put(85,20){\vector(1,0){5}} 
\put(105,20){\vector(-1,0){15}} 
\put(105,20){\vector(1,0){5}} 
\put(125,20){\vector(-1,0){15}} 
\put(125,20){\vector(1,0){15}} 
\put(155,20){\vector(-1,0){15}} 
\put(155,20){\vector(1,0){15}} 
\put(195,20){\vector(-1,0){5}} 
\put(185,20){\vector(1,0){5}} 
\put(210,20){\vector(-1,0){40}} 
\put(205,20){\vector(1,0){28}} 
\put(240,20){\vector(-1,0){8}} 
\put(240,20){\vector(1,0){20}} 
\put(305,20){\vector(-1,0){25}} 
\put(285,20){\vector(1,0){25}} 
\put(335,20){\vector(-1,0){25}} 
\put(315,20){\vector(1,0){25}} 
\put(55,28){\makebox(50,15)[t]{$C_{0,0}$}} 
\put(160,28){\makebox(50,15)[t]{$C_{0,H}$}} 
\put(286,28){\makebox(50,15)[t]{$C_{0,0}$}} 
\put(15,0){\makebox(50,15)[t]{$U_1$}} 
\put(50,0){\makebox(50,15)[t]{$D_1$}} 
\put(75,0){\makebox(50,15)[t]{$U_2$}} 
\put(100,0){\makebox(50,15)[t]{$D_2^*$}} 
\put(130,0){\makebox(50,15)[t]{$U_3$}} 
\put(155,0){\makebox(50,15)[t]{$D_3$}} 
\put(190,0){\makebox(50,15)[t]{$U_4^*$}} 
\put(220,0){\makebox(50,15)[t]{$D_4$}} 
\put(270,0){\makebox(50,15)[t]{$U_n$}} 
\put(300,0){\makebox(50,15)[t]{$D_n$}} 
\end{picture} 
\end{center}
\vspace{-0.3cm}
\caption{A sample-path of $X(t)$.}
\end{figure} 
The relevance of the random variables $D_j^*$ and $U_j^*$ will be clear in the proof of Theorem \ref{teor1} below. 
\par
Hence, we can recognize $4$ different phases of the motion of the particle:
\\
$-$ Phase 1: the particle starts from $0$ and returns again to $0$, without hitting $H$;\\
$-$ Phase 2: the particle starts from $0$ and hits $H$ before returning to $0$;\\
$-$ Phase 3: the particle starts from $H$ and returns again to $H$, without hitting $0$;\\
$-$ Phase 4: the particle starts from $H$ and hits $0$ before hitting $H$ again.\\
For the Phase $1$ (Phase $2$) we denote by $C_{0,0}$ ($C_{0,H}$) the random time between leaving the origin and arriving at the origin (at level $H$) without 
hitting $H$ (the origin). Similarly, for the Phase $3$ (Phase $4$), we denote by $C_{H,H}$ ($C_{H,0}$) the random time between leaving the level $H$ and arriving at 
$H$ (at the origin) without hitting the origin (the level $H$). These random times are called renewal cycles. Some examples of the different phases of the particle motion 
and of the corresponding renewal cycles are shown in Figure $2$.
\par
We remark that an affine transformation given by $\widetilde X_c(t):=c\, X(t)$, $t\geq 0$, easily leads to the 
case when the motion is characterized by an arbitrary constant velocity $c>0$. Hence, 
various results concerning velocity 1 can be easily extended to such more general case. 
\par
In the following sections an important role will be played by various useful stopping times, 
which will be assumed to be defined over the natural filtration of the relevant processes. 
\begin{figure}[t]
	\centering
	\subfigure[]{\includegraphics[scale=0.72]{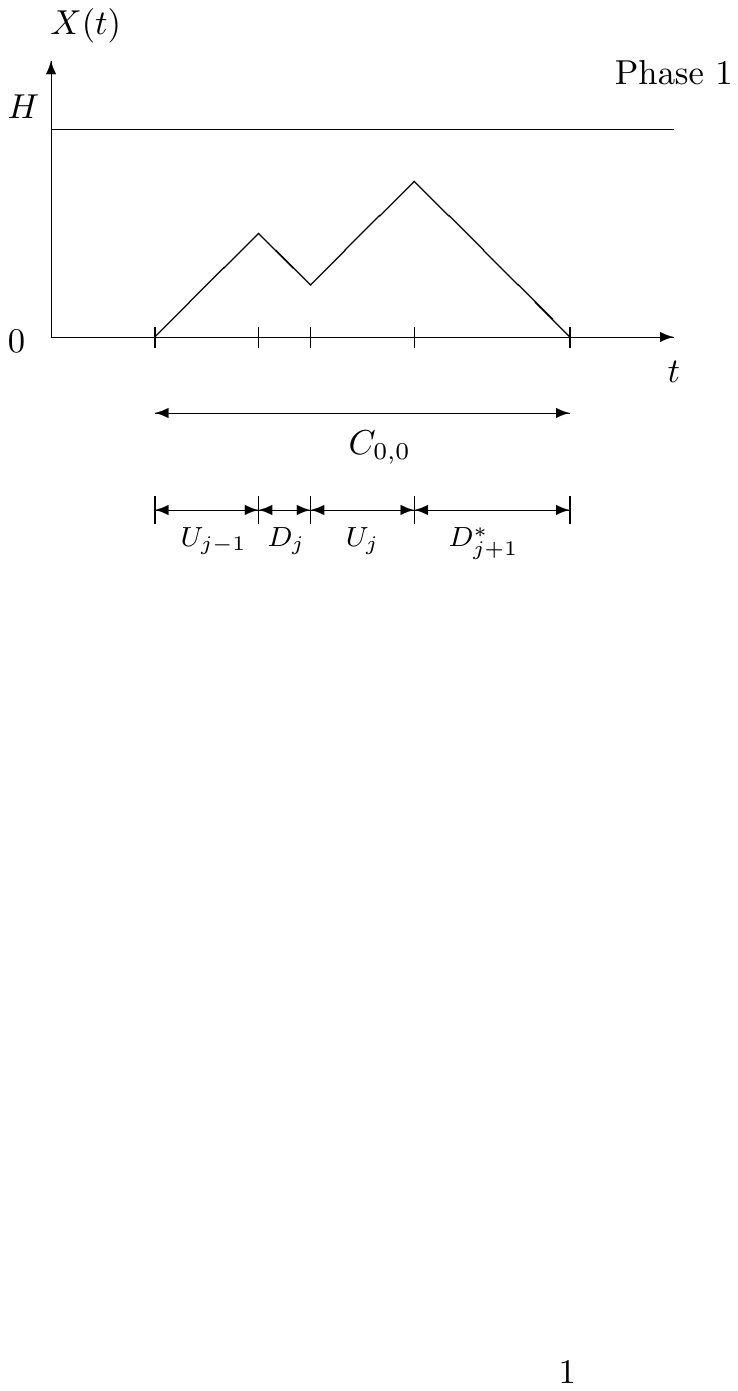}}\quad
	\subfigure[]{\includegraphics[scale=0.72]{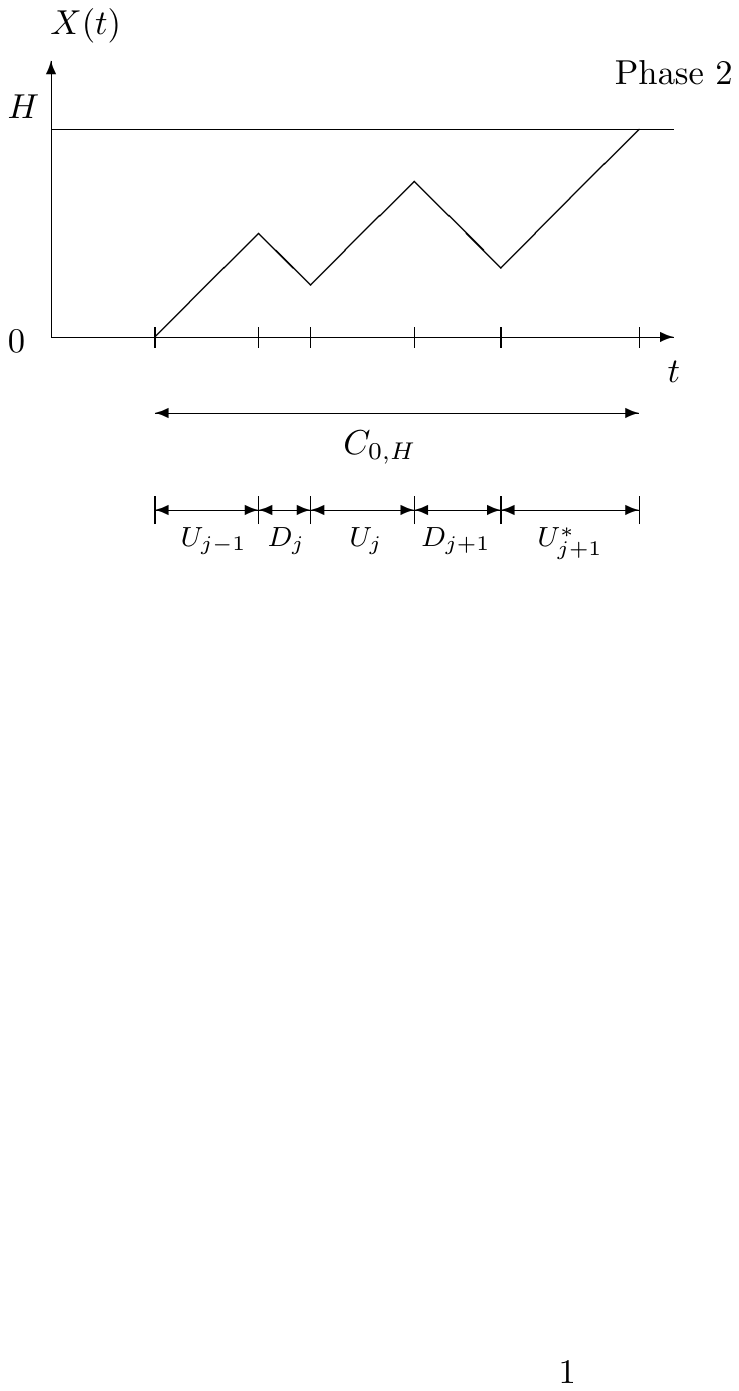}}\\
	\subfigure[]{\includegraphics[scale=0.72]{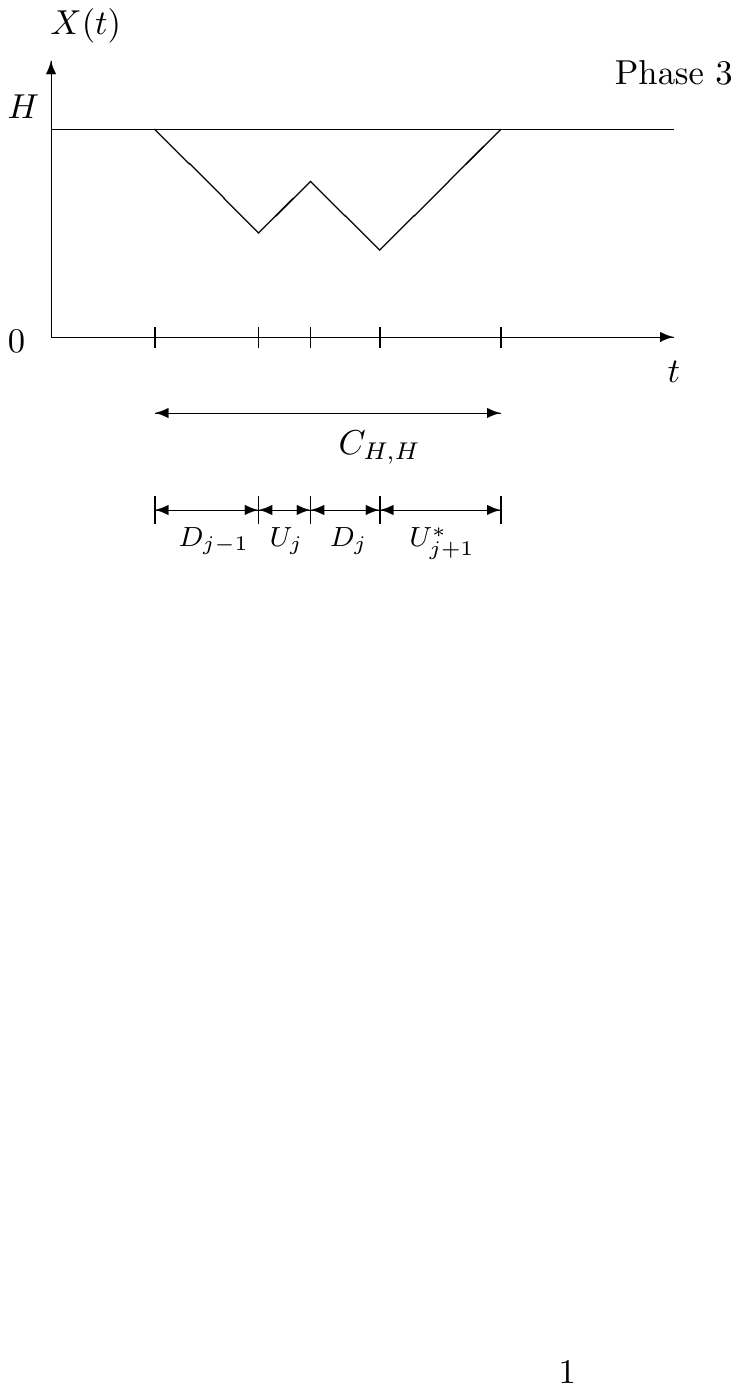}}\quad
	\subfigure[]{\includegraphics[scale=0.72]{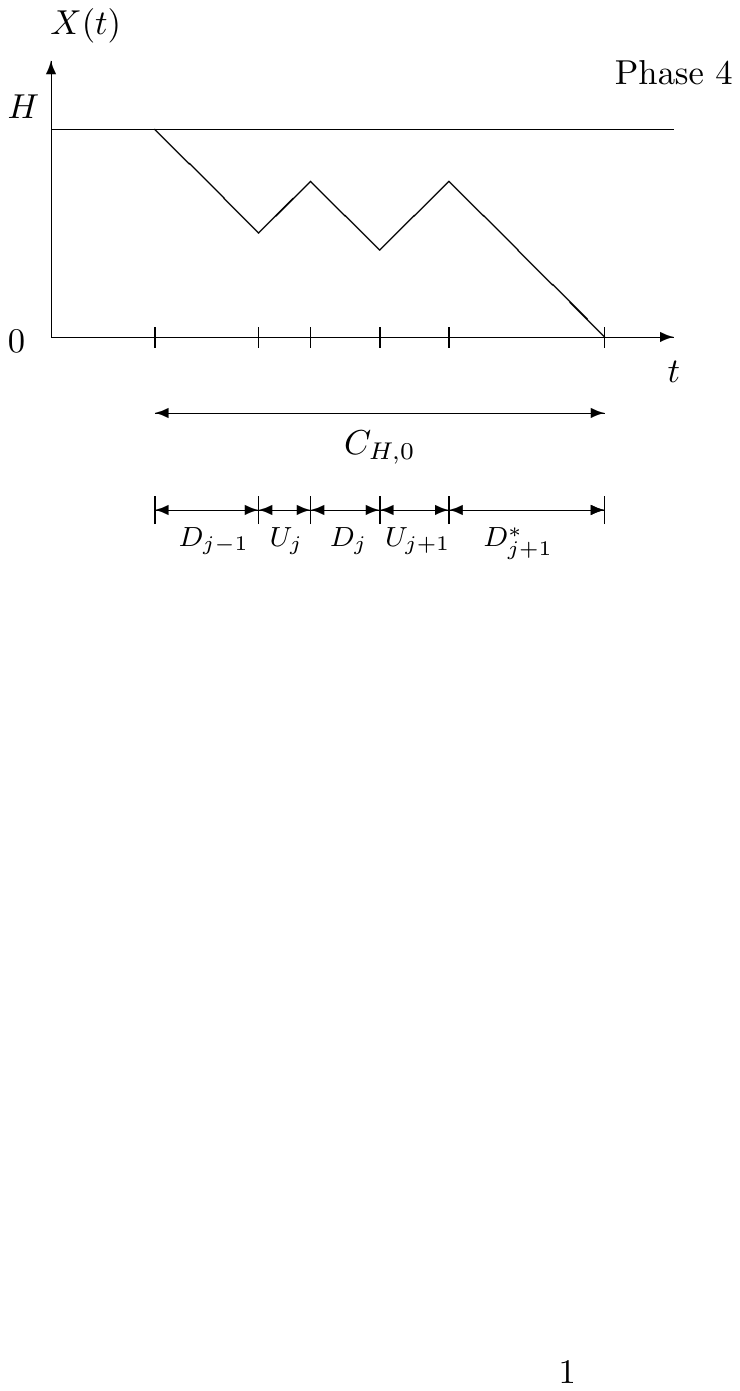}}
	\caption{Sample paths of $X(t)$ during the four different phases of motion.}
	\label{fig:2}
\end{figure}
%
\section{Distribution of the Stopping Times}\label{secStopping}
%
In this section, we assume that both the upward periods $U_j$ of the motion and the downward ones $D_j$ have exponential distribution with parameters $\lambda$ and 
$\mu$ respectively. This is the classical case concerning the asymmetric telegraph process. Moreover, in order to study the distribution of the renewal cycles, we introduce the auxiliary compound Poisson process
\begin{equation}
Y(t)=\sum_{n=0}^{N(t)}D_n, \quad t\ge 0,
\label{comp}
\end{equation}
where $D_0=0$, and  
$$
 N(t):=\max\left\{n\in\mathbb N_0: \sum_{i=1}^{n} U_i\le t\right\}
$$
is a Poisson process with intensity $\lambda$.
Hence, condition $Y(t)=s-t$, $0<t\leq s$, means that, during the interval $(0, s)$, the particle moves up for $t$ time instants and moves down for the remaining $s-t$ time instants. 
Furthermore, we remark that the random variables $\left\{D_j\right\}_{j\in \mathbb N}$ are independent from $\left\{N(t); t\ge 0\right\}$.
Hence, one has $\mathbb{P}\{Y(t)=0\}={\rm e}^{-\lambda t}$, $t\geq 0$. 
\par
Recalling that the running particle starts from the origin at time $t=0$, we define the stopping times
\begin{equation}
 T_{0,0}=\inf \left\{t>0: Y(t)\ge t\right\},
\label{T00}
\end{equation} 
\begin{equation}
 T_{0,H}=\inf \left\{t> H: Y(t) = t-H\right\},
\label{T0H}
\end{equation} 
\begin{equation}
 T_0^*=\min\left\{T_{0,0},T_{0,H}\right\}.
\label{star}
\end{equation}
Note that the stopping time $T_{0,0}$ 
can be interpreted as follows: $T_{0,0}$ is the smallest time such that 
$$
 \sum_{i=1}^{N(T_{0,0})}U_i-\sum_{i=1}^{N(T_{0,0})} D_i\leq 0,
$$
where the term on the left-hand-side gives the position of $X(t)$ after $2\,N(T_{0,0})-1$ velocity changes. Hence, 
$2\,N(T_{0,0})-1$ is the number of velocity changes such that $X(t)$ reaches the origin for the first time starting from the origin.   
Similarly, $T_{0,H}$ provides the smallest time such that 
$$
 \sum_{i=1}^{N(T_{0,H})+1}  U_i- \sum_{i=1}^{N(T_{0,H})}D_i =H,
$$
where the term on the left-hand-side gives the position of $X(t)$ after $2\,N(T_{0,H})-1$ velocity changes. 
In this case, $2\,N(T_{0,H})-1$ is the number of velocity changes such that $X(t)$ reaches the boundary $H$ for the first time starting from the origin. 
Due to definitions (\ref{T00})$\div$(\ref{star}), the following expressions for the renewal cycles related to Phase $1$ and Phase $2$
of the motion hold:
\begin{equation}
C_{0,0}={\bf 1}_{{\left\{T_0^*=T_{0,0}\right\}}}2T_{0,0},
\label{c00}
\end{equation}
\begin{equation}
C_{0,H}={\bf 1}_{{\left\{T_0^*=T_{0,H}\right\}}}\left(2T_{0,H}-H\right),
\label{c0h}
\end{equation}
where ${\bf 1}_{\bf A}$ denotes the indicator function of the set ${\bf A}$.
Indeed, one has $X(C_{0,0})=0$ if and only if $C_{0,0}=2 \sum_{i=1}^{k}U_i$ for a given $k$. Moreover, in this case we have 
$Y(T_{0,0})=\sum_{i=1}^{N(T_{0,0})}D_i=T_{0,0}$. Hence, being $N(T_{0,0})$ even, the following relation holds:
$$
 2T_{0,0}=2 \sum_{i=1}^{N(T_{0,0})}D_i=2 \sum_{i=1}^{N(T_{0,0})}U_i=C_{0,0}.
$$
Similarly, we have $X(C_{0,H})=H$ if and only if $C_{0,H}-2\sum_{i=1}^k D_i=H$ for a given $k$. 
So from the definition of $T_{0,H}$, in this case it is easy to note that $C_{0,H}=2T_{0,H}-H$. 
Figures $3$ and $4$ show some sample paths of $X(t)$ and of the corresponding compound Poisson process $Y(t)$ during Phase $1$ and Phase $2$ of the motion, respectively.
For instance,  from Figure 3(b) we have $T_{0,0} = 1.5$, which is in agreement with the fact that the total upward (downward) total time 
for $X(t)$ is 1.5, as can be seen in Figure 3(a). 
%
\begin{figure}[t]
	\centering
	\hspace*{-0.5cm}
	\subfigure[]{\includegraphics[width=5.5cm,height=4cm]{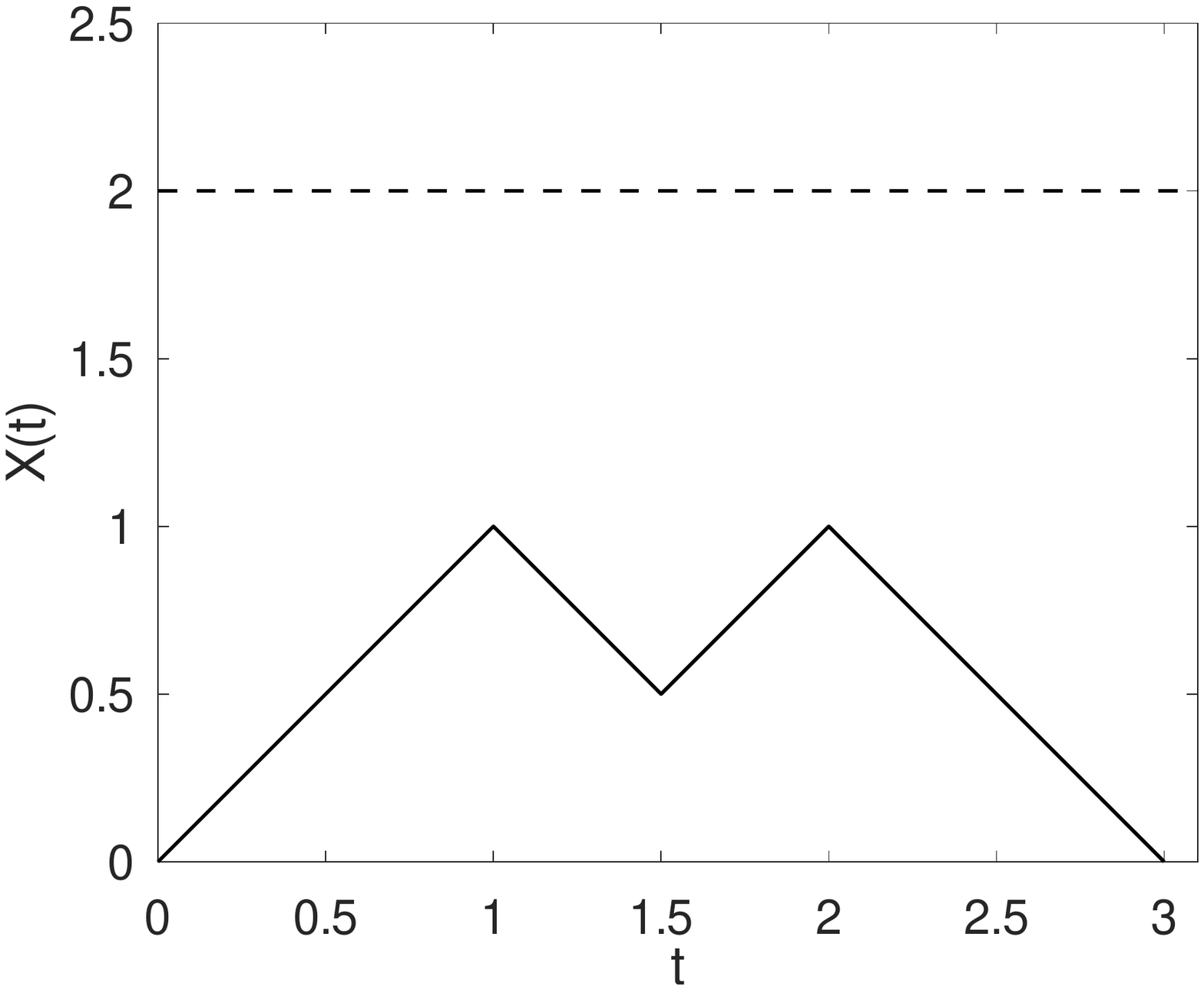}}\quad
	\subfigure[]{\includegraphics[width=5.5cm,height=3.9cm]{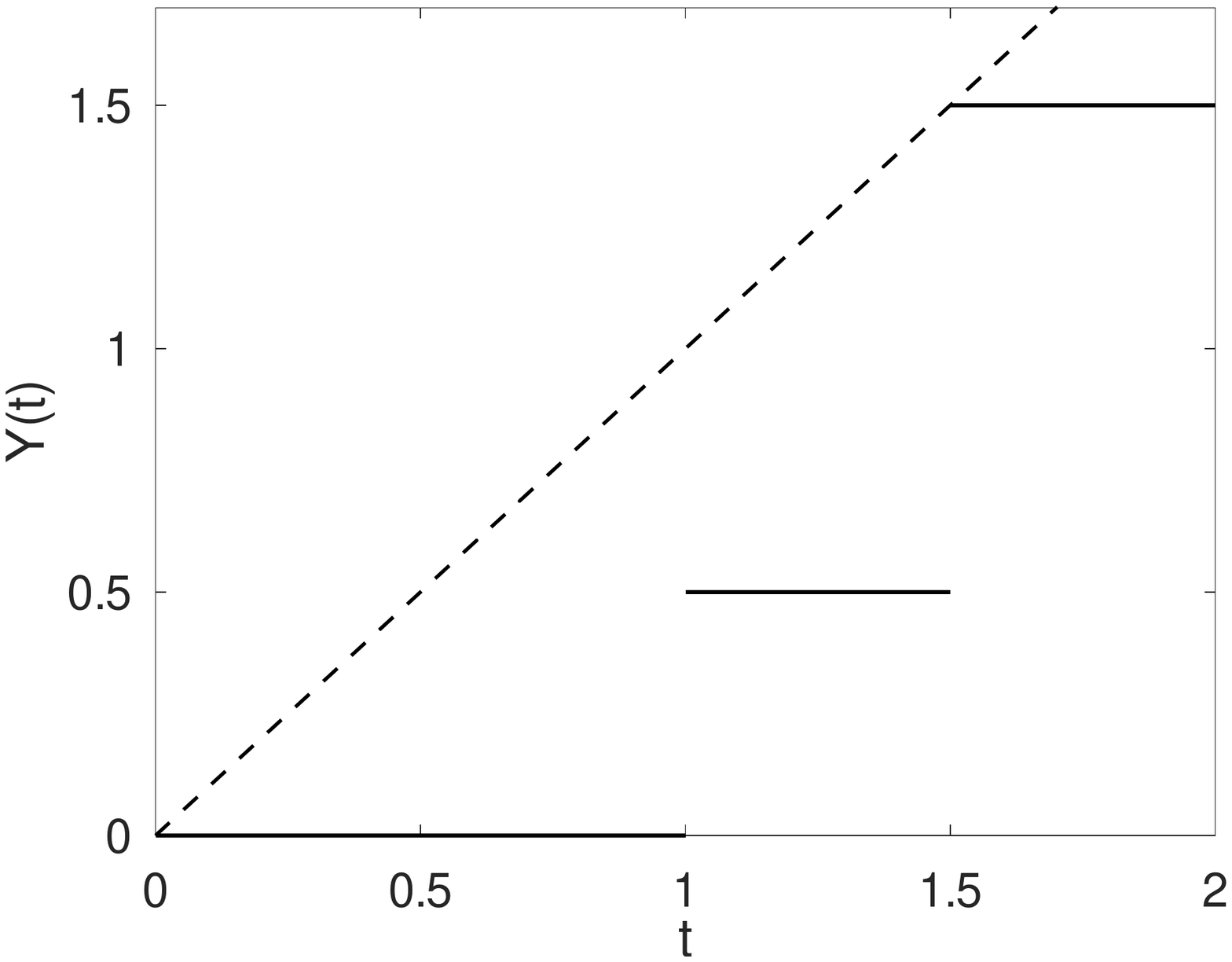}}\\
	\caption{(a) The sample path of the process $X(t)$ and (b) the corresponding compound Poisson process $Y(t)$ (solid line) during the Phase 1 of the motion, for the case $H=2$.}
	\label{fig:Figura3}
\end{figure}
%
\begin{figure}[t]
	\centering
	\hspace*{-0.4cm}
	\hspace*{-0.5cm}\subfigure[]{\includegraphics[width=5.5cm,height=4cm]{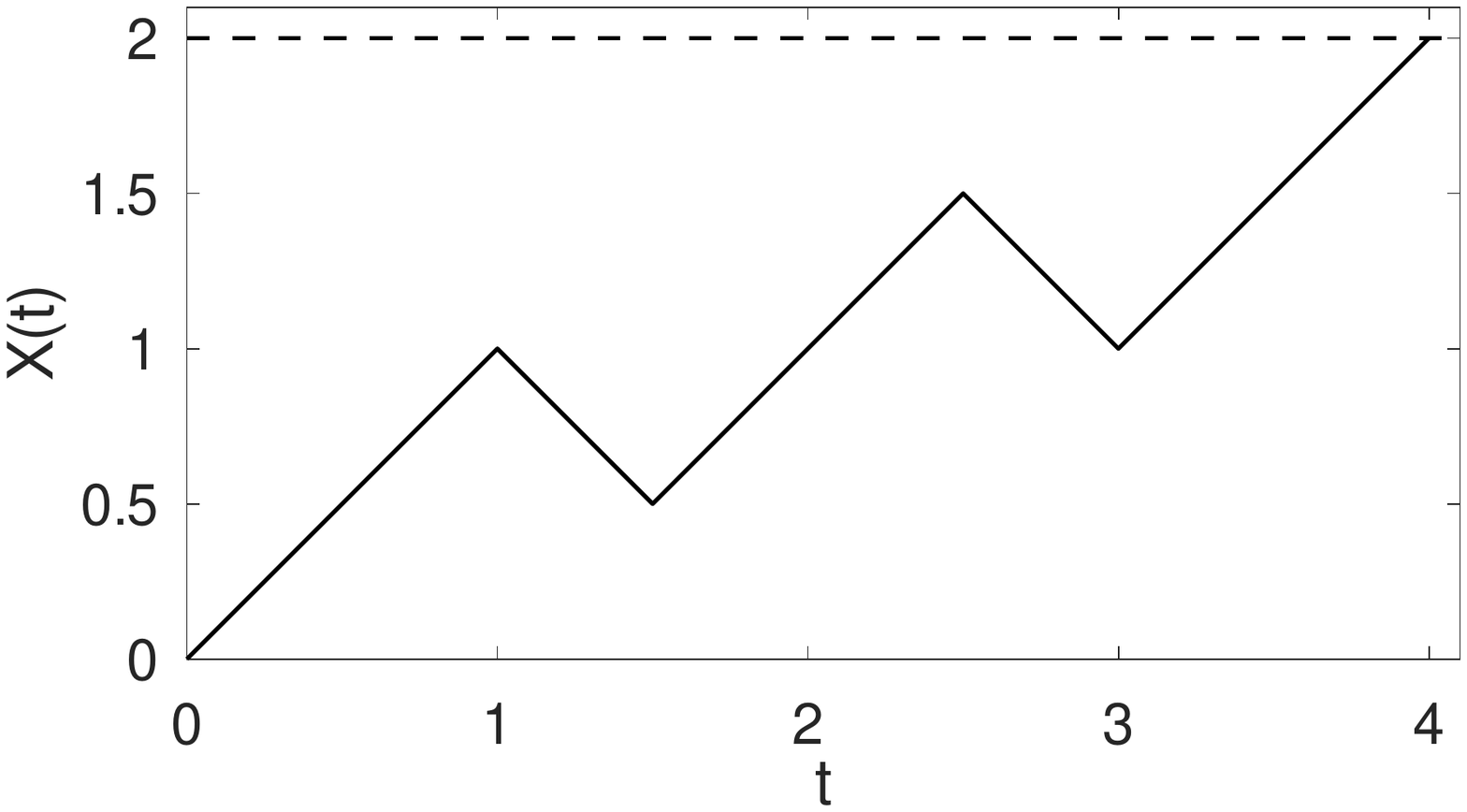}}\qquad
	\hspace*{-0.5cm} \subfigure[]{\includegraphics[width=5.5cm,height=4.1cm]{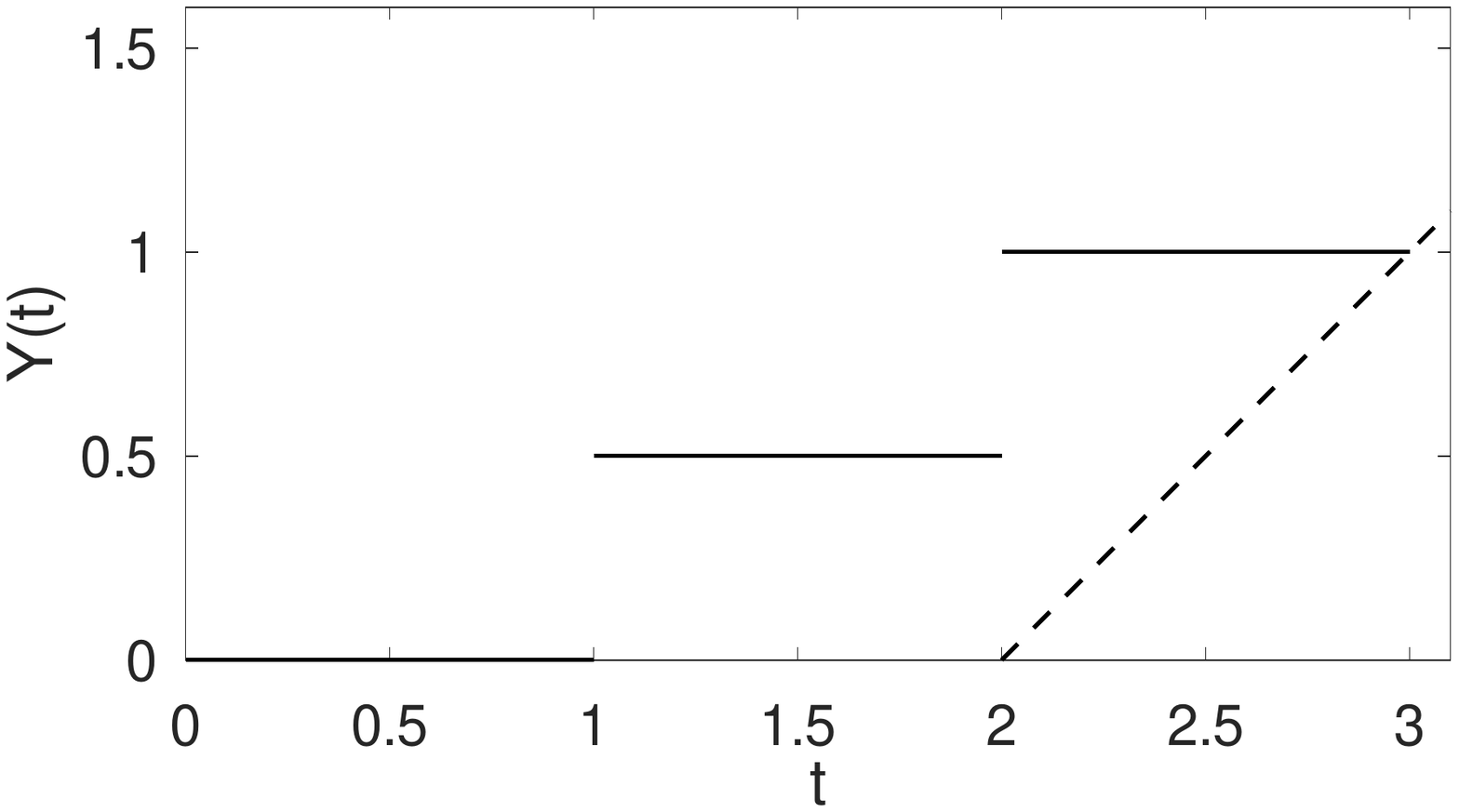}}\\
	\caption{(a) The sample path of the process $X(t)$ and (b) the corresponding compound Poisson process $Y(t)$ (solid line) during the Phase 2 of the motion, for the case $H=2$.}
	\label{fig:Figura4}
\end{figure}
\par
Let us now consider a phase in which the particle begins the motion from the level $H$.
Clearly, the first movement is downward, since the state space of the process is $\left[0, H\right]$, and it is governed by an exponentially distributed random variable, say $D$. \par
Similarly to the case of Phase $1$ and $2$, we define the following stopping times
\begin{equation}
T_{H,H}(D)=\inf\left\{t\ge 0: Y(t)= t-D\right\},
\label{THH}
\end{equation}
\begin{equation}
T_{H,0}(D)=\inf\left\{t\ge 0: Y(t)\ge t-D+H\right\},
\label{TH0}
\end{equation}
\begin{equation}
T_{H}^*(D)=\min\left\{T_{H,H}(D), T_{H,0}(D)\right\}.
\label{THstar}
\end{equation}
We remark that the stopping time $T_{H,H}(D)$ is the smallest time such that 
$$
 \sum_{i=1}^{N(T_{H,H}(D))}D_i-\sum_{i=1}^{N(T_{H,H}(D))+1} U_i+D= 0,
$$
where the term on the left-hand-side provides the position of $X(t)$ after 
$2\,N(T_{H,H}(D))-1$ velocity changes. Consequently, 
$2\,N(T_{H,H}(D))-1$ is the number of velocity changes such that $X(t)$ reaches $H$ 
for the first time starting from $H$.   
In analogy, $T_{H,0}(D)$ is the smallest time such that 
$$
 \sum_{i=1}^{N(T_{H,0}(D))}  D_i- \sum_{i=1}^{N(T_{H,0}(D))}U_i +D-H\geq 0,
$$
where the term on the left-hand-side gives the position of $X(t)$ after $2\,N(T_{H,0}(D))-1$ velocity changes. 
In this case, $2\,N(T_{H,0}(D))-1$ 
is the number of velocity changes such that $X(t)$ reaches the origin for the first time starting from the boundary $H$. 
Recalling definitions (\ref{THH})$\div$(\ref{THstar}), the renewal cycles related to Phase $3$ and Phase $4$ admit of the following expressions:
\begin{equation}
\label{defchh}
C_{H,H}(D)={\bf 1}_{\left\{T_H^*(D)=T_{H,H}(D)\right\}}2T_{H,H}(D),
\end{equation}
\begin{equation}
\label{defch0}
C_{H,0}(D)={\bf 1}_{\left\{T_H^*(D)=T_{H,0}(D)\right\}}\left(2T_{H,0}(D)+H\right).
\end{equation}
Clearly, if $D\ge H$ we have that $C_{H,H}=0$ and $C_{H,0}=H$. 
We note that $X(C_{H,H}(D))=H$ if and only if $C_{H,H}(D)=2 \sum_{i=1}^{k}D_i+2D$ for a given $k$. 
Moreover, in this case one has  
$Y(T_{H,H}(D))=\sum_{i=1}^{N(T_{H,H}(D))}D_i=T_{H,H}(D)-D$, and thus 
$2 T_{H,H}(D)=C_{H,H}(D)$. Similarly, we get 
$X(C_{H,0}(D))=0$ if and only if $C_{H,0}(D)=2 \sum_{i=1}^{k}U_i+H$ for a given $k$. 
In addition, in this case one has  
$\sum_{i=1}^{N(T_{H,0}(D))}D_i+D-H=\sum_{i=1}^{N(T_{H,0}(D))}U_i=T_{H,0}(D)$, so that
$2 T_{H,0}(D)+H=C_{H,0}(D)$. 
\par
Figures $5$ and $6$ show some sample paths of $X(t)$ and $Y(t)$ during Phases $3$ and $4$ of the motion.
%
	\begin{figure}[t]
	\centering
	\vspace*{-0.1cm}
	\hspace*{-0.4cm}
	\subfigure[]{\includegraphics[width=5.5cm,height=4.1cm]{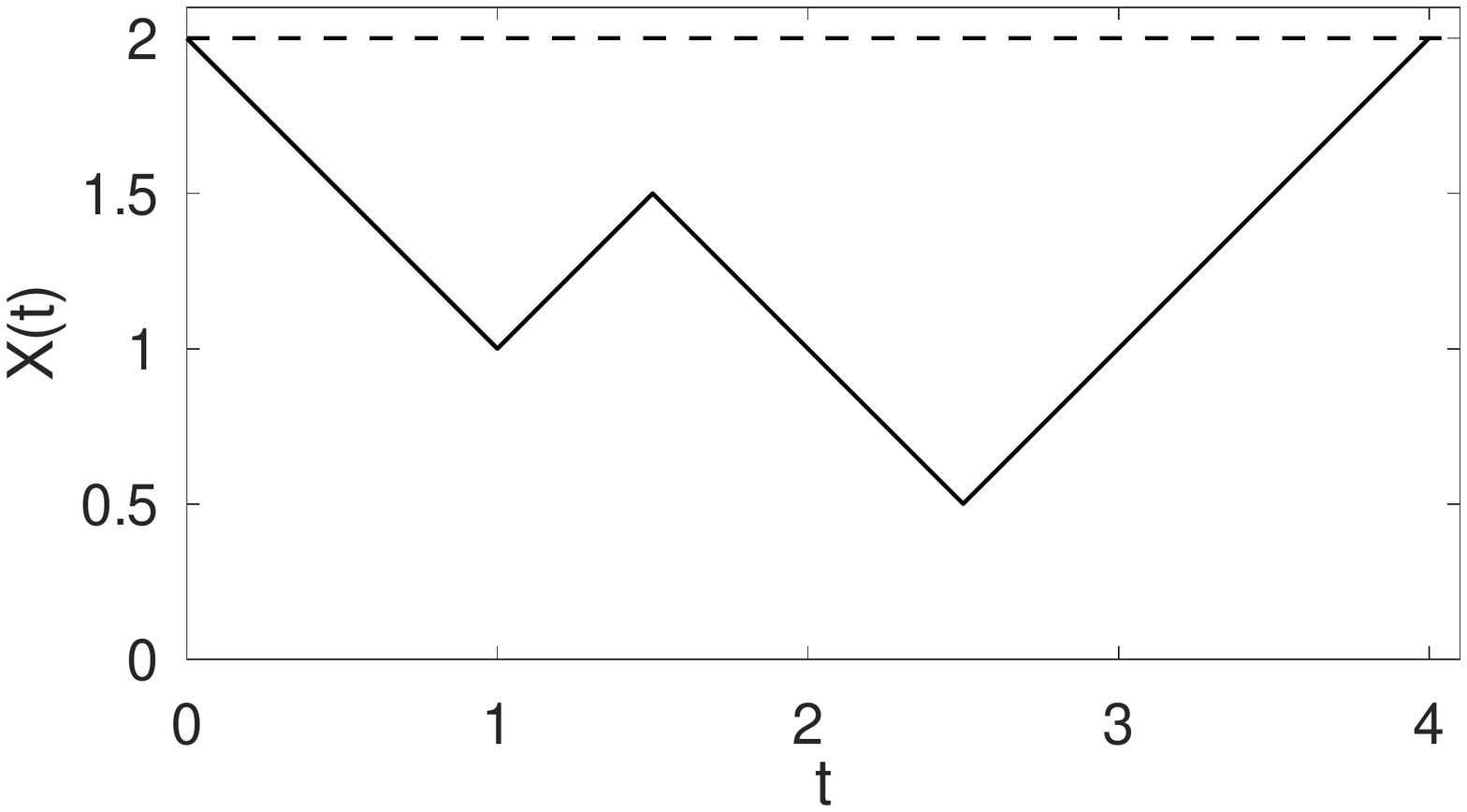}}\quad
	\subfigure[]{\includegraphics[width=5.5cm,height=4.1cm]{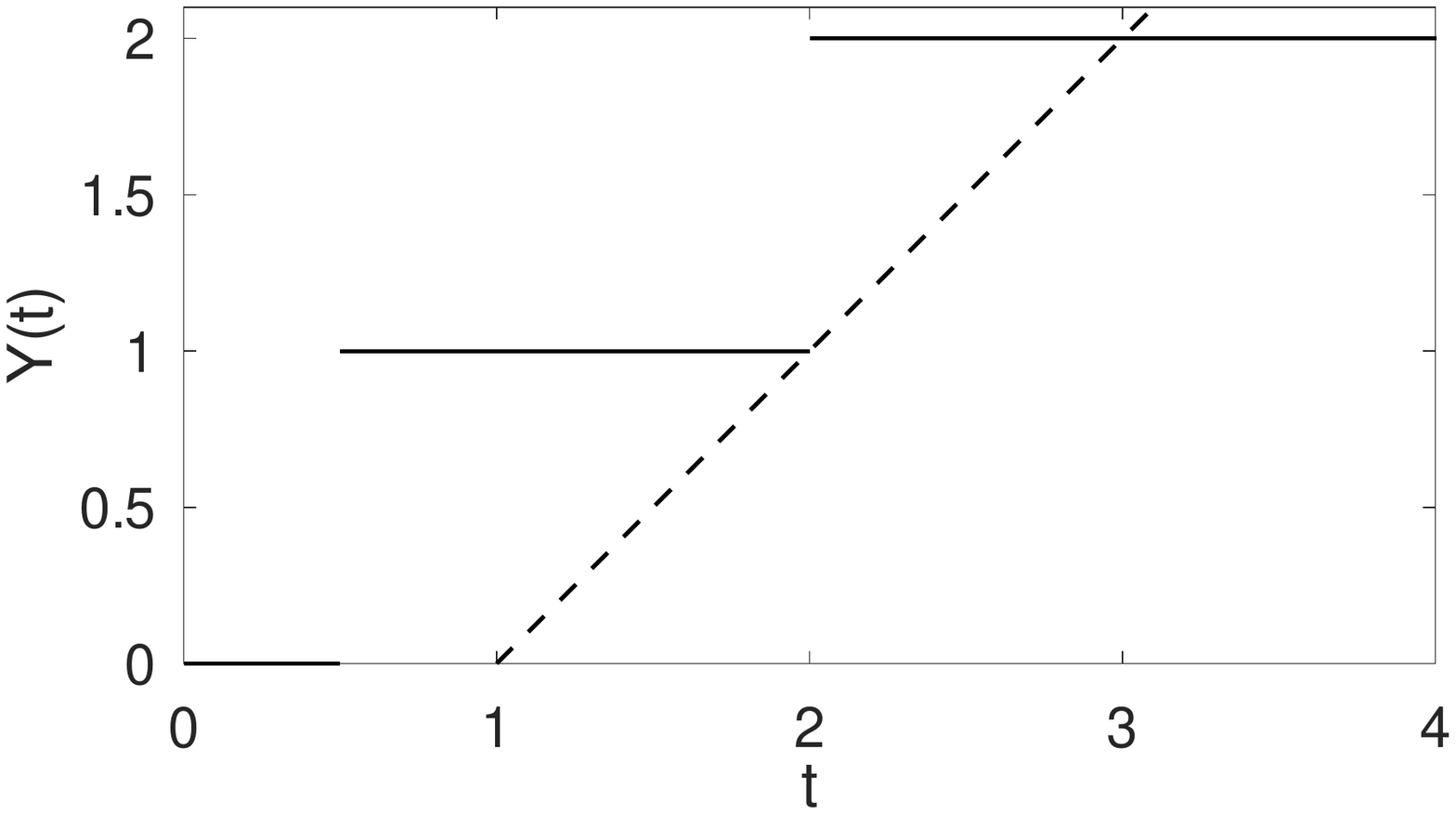}}\\
	\caption{(a) The sample path of the process $X(t)$ and (b) the corresponding compound Poisson process $Y(t)$ (solid line) during the Phase 3 of the motion, for the case $H=2$.}
	\label{fig:Figura5}
\end{figure}
%
\begin{figure}[t]
	\centering
	\hspace*{-0.5cm} \subfigure[]{\includegraphics[width=5.5cm,height=4.1cm]{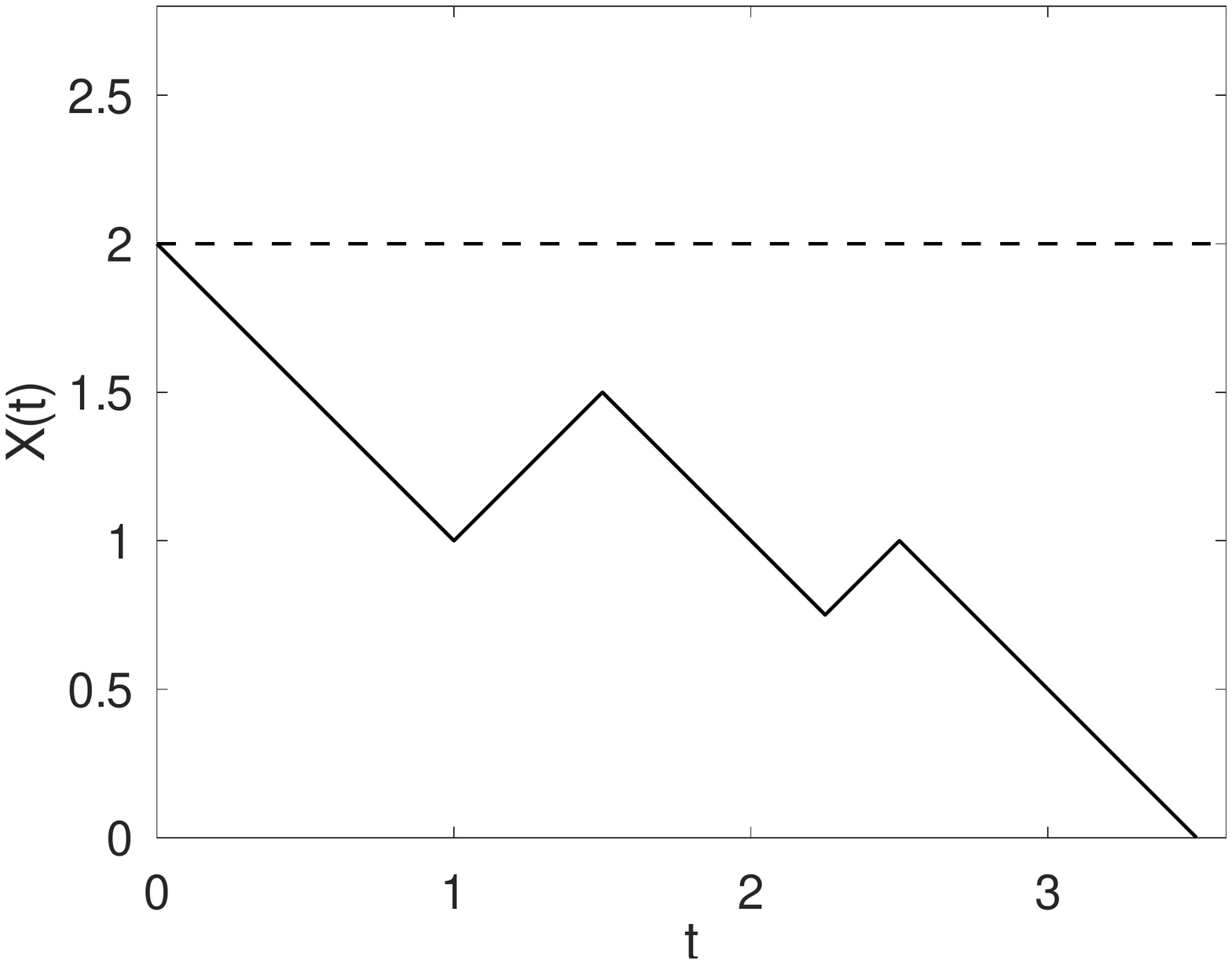}}\quad
	\hspace*{-0.5cm} \subfigure[]{\includegraphics[width=5.5cm,height=4.1cm]{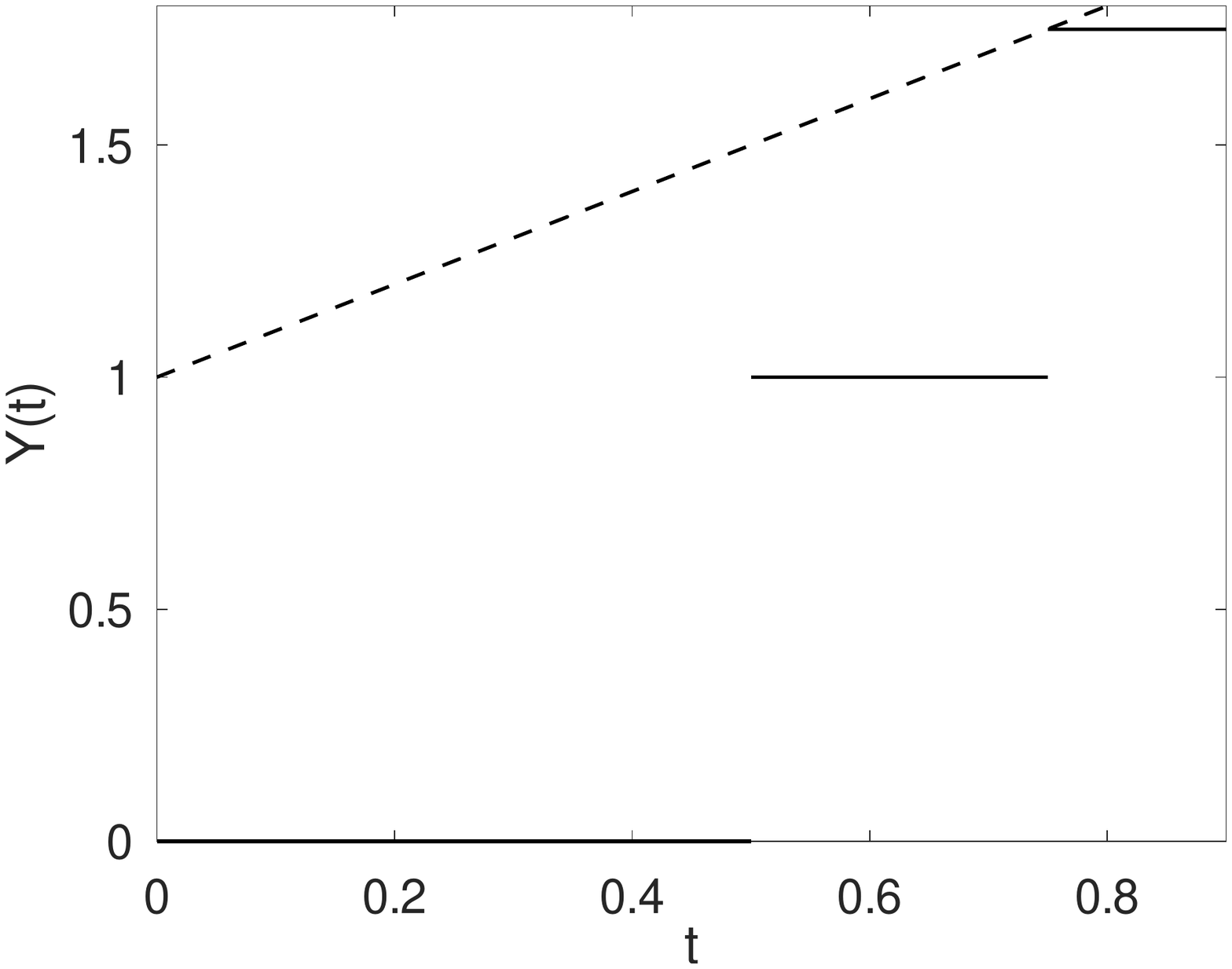}}\\
	\caption{(a) The sample path of the process $X(t)$ and (b) the corresponding compound Poisson process $Y(t)$ (solid line) during the Phase 4 of the motion, for the case $H=2$.}
	\label{fig:Figura6}
\end{figure}
\par
We point out that the above approach, based on the auxiliary compound Poisson process $Y(t)$ and its 
first-crossing time through suitable boundaries, 
has been adopted successfully in some papers concerning the telegraph process (see, for instance, \cite{Bshouty}, \cite{DCMaIuZacks2013}, \cite{DiCrescenzo}).  
%
\section{Probabilities of Phases}\label{Sec4}
%
The aim of this Section is to evaluate the probability that the particle starting from a certain boundary $u$ reaches the threshold $v$ ($u$), without hitting the other boundary 
$u$ ($v$), for $u,v\in \{0, H\}$. Hence, with reference to the stopping times defined in Eqs.\ (\ref{T00})$\div$(\ref{star}) and (\ref{THH})$\div$(\ref{THstar}), let us consider the following probabilities
\begin{equation}
P_{0,H}=\mathbb P\left\{T_0^*=T_{0,H}\right\},\qquad\qquad P_{0,0}=\mathbb P\left\{T_0^*=T_{0,0}\right\}=1-P_{0,H},
\label{4}
\end{equation}
and, for given $D$,
\begin{equation}
P_{H,0}(D)=\mathbb P\left\{T_H^*=T_{H,0}(D)\right\},\qquad
P_{H,H}(D)=\mathbb P\left\{T_H^*=T_{H,H}(D)\right\}=1-P_{H,0}(D).
\label{4bis}
\end{equation}
Due to the above definitions, $P_{0,0}$ ($P_{0,H}$) gives the probability that, starting from the origin, the particle reaches the origin (the level $H$) before 
hitting the other boundary $H$ (the level $0$). Similarly, for a given downward time $D$, $P_{H,H}(D)$ ($P_{H,0}(D)$) gives the probability that, starting from the 
level $H$, the particle reaches $H$ (the level $0$) before arriving at the other boundary $0$ (the level $H$). 
\par
In order to determine the explicit expressions of such probabilities, now we  provide the following result.
\begin{proposition}\label{prop4.1}
Let $Y(T_0^*)$ be the compound Poisson process \eqref{comp} evaluated at the stopping time $T_0^*$ introduced in 
(\ref{star}). The following Wald martingale equation holds
\begin{equation}
\mathbb E\left[e^{\theta Y(T_0^*)-\lambda T_0^*\theta/(\mu-\theta)}\right]=1, \qquad \theta<\mu.
\label{1}
\end{equation}
\end{proposition}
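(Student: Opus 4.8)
The plan is to recognise the quantity inside the expectation as the value at $T_0^*$ of the exponential (Wald) martingale associated with the compound Poisson process $Y$, and then to justify an optional stopping argument at the almost surely finite stopping time $T_0^*$. First I would compute the moment generating function of $Y(t)$: conditioning on $N(t)$ and using that the jumps $D_i$ are i.i.d.\ exponential with parameter $\mu$, one gets, for $\theta<\mu$,
\begin{equation*}
\mathbb{E}\bigl[e^{\theta Y(t)}\bigr]
=\exp\Bigl\{\lambda t\Bigl(\tfrac{\mu}{\mu-\theta}-1\Bigr)\Bigr\}
=\exp\Bigl\{\tfrac{\lambda t\,\theta}{\mu-\theta}\Bigr\}.
\end{equation*}
Setting $M(t):=\exp\{\theta Y(t)-\lambda t\,\theta/(\mu-\theta)\}=e^{\theta Y(t)}/\mathbb{E}[e^{\theta Y(t)}]$, the claimed identity is exactly $\mathbb{E}[M(T_0^*)]=1$, and since $Y$ has stationary independent increments the process $\{M(t)\}$ is a nonnegative martingale for its natural filtration, with $M(0)=1$. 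The proposition therefore reduces to showing that optional stopping is valid at $T_0^*$.

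Next I would record two structural facts. Because the particle is confined to $[0,H]$ until it first meets a boundary, $T_0^*<\infty$ almost surely, so $M(t\wedge T_0^*)\to M(T_0^*)$ a.s.\ as $t\to\infty$; and applying the optional stopping theorem to the bounded times $t\wedge T_0^*$ yields $\mathbb{E}[M(t\wedge T_0^*)]=1$ for every $t\ge 0$. The whole content then lies in passing to the limit under the expectation. Fatou's lemma already gives $\mathbb{E}[M(T_0^*)]\le 1$, so it remains to exclude loss of mass, i.e.\ to establish uniform integrability of $\{M(t\wedge T_0^*)\}_{t\ge0}$.

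For the uniform integrability I would exploit the geometric meaning of $T_0^*$. On $\{T_0^*>t\}$ the position $X=t-Y(t)$ lies strictly between $0$ and $H$, which forces $t-H\le Y(t)\le t$; combined with $0<\theta<\mu$ (and the symmetric estimate when $\theta\le 0$) this bounds $M(t)$ on that event by $e^{(\theta-c)t+C}$, where $c=\lambda\theta/(\mu-\theta)$ and $C$ is a constant depending only on $\theta$ and $H$. Hence $\mathbb{E}[M(t)\mathbf{1}_{\{T_0^*>t\}}]\le e^{C}\,e^{(\theta-c)t}\,\mathbb{P}\{T_0^*>t\}$, and a parallel estimate controls $M(T_0^*)\mathbf{1}_{\{T_0^*>t\}}$ up to the overshoot of $Y$ at $T_{0,0}$, which is dominated by a single exponential jump.

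The main obstacle is thus quantitative: since the factor $e^{(\theta-c)t}$ grows exactly when $\theta\in(0,\mu-\lambda)$ (and analogously for $\theta<0$), I must show that $T_0^*$ has an exponentially decaying tail strong enough that $e^{(\theta-c)t}\,\mathbb{P}\{T_0^*>t\}\to 0$ for every admissible $\theta<\mu$. I would obtain this from the confinement by a standard renewal/comparison argument: the particle hits one of the boundaries within any fixed window of length, say, $2H$ with probability bounded below (for instance whenever an up- or down-period exceeds $H$), so $\mathbb{P}\{T_0^*>2Hk\}$ decays geometrically in $k$, giving the required exponential bound. With the tail estimate in hand, the dominated/uniform-integrability passage is immediate, the integrability of $M(T_0^*)$ is secured, mass loss is excluded, and $\mathbb{E}[M(T_0^*)]=1$, which is the asserted Wald equation.
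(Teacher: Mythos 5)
Your overall strategy coincides exactly with the paper's: identify $M(t,\theta)=e^{\theta Y(t)}/\mathbb{E}[e^{\theta Y(t)}]=\exp\{\theta Y(t)-\lambda\theta t/(\mu-\theta)\}$ as the Wald exponential martingale of the compound Poisson process and apply optional stopping at $T_0^*$. The paper's own proof stops there, simply invoking Doob's theorem, so everything you add about verifying its hypotheses is extra care rather than a deviation in method. Your structural observations are sound: on $\{T_0^*>t\}$ one indeed has $t-H<Y(t)<t$ (from the definitions of $T_{0,0}$ and $T_{0,H}$), and the problem does reduce to showing $\mathbb{E}\bigl[M(t,\theta)\,\mathbf{1}_{\{T_0^*>t\}}\bigr]\to 0$.

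The genuine gap is in the final quantitative step. Writing $c=\lambda\theta/(\mu-\theta)$, the exponent $\theta-c=\theta(\mu-\lambda-\theta)/(\mu-\theta)$ is positive for $\theta\in(0,\mu-\lambda)$ (when $\mu>\lambda$) and its supremum over that range is $(\sqrt{\mu}-\sqrt{\lambda})^2$, attained at $\theta=\mu-\sqrt{\lambda\mu}$. So you need $\mathbb{P}\{T_0^*>t\}$ to decay at an exponential rate at least $(\sqrt{\mu}-\sqrt{\lambda})^2$. The windowing argument you propose (a boundary is hit in each interval of length $2H$ with probability at least $p$, e.g.\ because some up- or down-period exceeds $H$) gives a rate of order $-\log(1-p)/(2H)$ with $p$ as small as $e^{-\lambda H}$; for moderate or large $H$ this is far below $(\sqrt{\mu}-\sqrt{\lambda})^2$, so the product $e^{(\theta-c)t}\mathbb{P}\{T_0^*>t\}$ is not controlled by your bound. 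A Chernoff estimate on $\mathbb{P}\{Y(t)>t-H\}$ closes the gap for every $\theta$ except the critical one, where the rates coincide exactly and one needs either the polynomial correction in $\mathbb{P}\{t-H<Y(t)<t\}\asymp t^{-1/2}e^{-(\sqrt{\mu}-\sqrt{\lambda})^2 t}$, or an alternative device such as proving the identity first for $\theta$ with $\theta-c\le 0$ (where $M(t\wedge T_0^*)$ is bounded) and extending by analyticity. So: right approach, correct reduction, but the tail estimate you lean on at the one point you yourself flagged as the main obstacle is not strong enough as stated.
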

\begin{proof}
Let us consider the Wald martingale related to the compound Poisson process $Y(t)$, i.e.
\begin{equation}\label{2}
M(t,\theta)=\frac{\exp(\theta Y(t))}{\mathbb E\left[e^{\theta Y(t)}\right]}.
\end{equation}
Recalling that $N(t)$ is a Poisson process with intensity $\lambda$, and that the random variables $D_n$ have exponential distribution with parameter $\mu$, 
the moment generating function of the compound Poisson process $Y(t)$ is given by
\begin{equation*}
\mathbb E\left[e^{\theta Y(t)}\right]=\exp\left\{\frac{\lambda t \theta}{\mu-\theta}\right\},
\qquad \theta<\mu.
\end{equation*}
Hence, from the Eq.\ \eqref{2} one has
\begin{equation*}
M(t,\theta)=\exp\left\{\theta Y(t)-\frac{\lambda\theta t}{\mu -\theta}\right\},
\qquad \theta<\mu.
\end{equation*}
Due to the martingale property, we have, for every $t>0$, 
$$
\mathbb E[M(t,\theta)]=1,
\qquad \theta<\mu.
$$
Hence, the proof immediately follows from Doob's optional stopping Theorem.
\end{proof}
\par
Let us now determine the probabilities \eqref{4} and \eqref{4bis}.
\begin{theorem}\label{teor1}
The probabilities that a particle starting from the origin (the level $H$), hits the level $H$ (the origin), 
before hitting the origin (the level $H$) again, are given, for $\lambda\neq \mu$, 
by
\begin{equation}
P_{0,H}=\frac{\mu -\lambda}{\mu-\lambda \, e^{(\lambda-\mu)H}},
\label{probP0H}
\end{equation} 
and
\begin{equation}
P_{H,0}=\frac{\lambda -\mu}{\lambda-\mu \, e^{(\mu-\lambda)H}}.
\label{probPH0}
\end{equation}
Similarly, the probabilities that a particle starting from the origin (the level $H$), hits the origin (the level $H$), 
before hitting the level $H$ (the origin), for $\lambda\neq \mu$ are expressed as
\begin{equation}
P_{0,0}=1-P_{0,H}=\frac{\lambda e^{H(\lambda -\mu)}-\lambda}{\lambda e^{H(\lambda-\mu)}-\mu},
\label{probP00}
\end{equation} 
and
\begin{equation}
P_{H,H}=1-P_{H,0}=\frac{\mu \left(1- e^{-(\lambda - \mu) H}\right)}{\lambda-\mu e^{-(\lambda-\mu) H}}.
\label{probPHH}
\end{equation}
\end{theorem}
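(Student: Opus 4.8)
The plan is to evaluate the Wald martingale identity \eqref{1} of Proposition \ref{prop4.1} at the stopping time $T_0^*$ and to split the expectation according to the two phases $\{T_0^*=T_{0,0}\}$ and $\{T_0^*=T_{0,H}\}$, whose probabilities are the unknowns $P_{0,0},P_{0,H}$ of \eqref{4}. Writing $g(\theta)=\lambda\theta/(\mu-\theta)$ for the cumulant appearing in \eqref{1}, the exponent there is $\theta Y(T_0^*)-g(\theta)\,T_0^*$. The decisive observation is geometric: in the accumulated up-time parametrisation, the sample paths of $t\mapsto Y(t)-t$ decrease with slope $-1$ between the upward jumps of $Y$. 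Hence the level $-H$ defining $T_{0,H}$ in \eqref{T0H} is reached continuously from above, so that $Y(T_{0,H})=T_{0,H}-H$ with no overshoot, whereas the level $0$ defining $T_{0,0}$ in \eqref{T00} is attained through an upward jump, so that $Y(T_{0,0})=T_{0,0}+R$ with a strictly positive overshoot $R$.

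The key point is that, since the jumps $D_n$ are exponential with parameter $\mu$, the strong Markov property applied at the crossing epoch together with the lack of memory of the exponential law forces $R$ to be exponentially distributed with parameter $\mu$ and independent of the pre-crossing history, in particular independent of the phase indicator. I would then choose $\theta=\mu-\lambda$, which annihilates the coefficient $\theta-g(\theta)$ of $T_0^*$ and thereby removes all dependence on the unknown law of $T_0^*$. The identity \eqref{1} collapses to
\begin{equation*}
\mathbb E\!\left[e^{(\mu-\lambda)R}\right]P_{0,0}+e^{-(\mu-\lambda)H}\,P_{0,H}=1 ,
\end{equation*}
and since $\mathbb E[e^{(\mu-\lambda)R}]=\mu/\lambda$ for $R$ exponential of parameter $\mu$, inserting $P_{0,0}=1-P_{0,H}$ gives a single linear equation whose solution is exactly \eqref{probP0H}; formula \eqref{probP00} follows by complementation. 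The hypothesis $\lambda\neq\mu$ is precisely what makes $\theta=\mu-\lambda$ a nontrivial substitution.

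For the boundary $H$ I would run the same optional-stopping argument on the Wald martingale stopped at $T_H^*(D)$, after conditioning on the first downward period $D=d$ (which is independent of $Y$), so that the evident analogue of Proposition \ref{prop4.1} applies. The roles of the two crossings are interchanged: the level $-d$ in \eqref{THH} is hit exactly from above, giving $Y(T_{H,H})=T_{H,H}-d$, while the level $H-d>0$ in \eqref{TH0} is overshot by an independent, exponentially distributed amount of parameter $\mu$. Taking again $\theta=\mu-\lambda$ produces, for $0<d<H$, a linear equation for $P_{H,0}(d)$ of \eqref{4bis} whose solution is $P_{H,0}(d)=\lambda\bigl(e^{(\mu-\lambda)d}-1\bigr)/\bigl(\mu e^{(\mu-\lambda)H}-\lambda\bigr)$, while for $d\ge H$ the particle descends straight to the origin, so $P_{H,0}(d)=1$. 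The stated probability is then recovered by averaging over the first downward period, $P_{H,0}=\mathbb E_D\!\left[P_{H,0}(D)\right]$, splitting the expectation at $d=H$; the two resulting elementary integrals combine so that every term except $\mu-\lambda$ cancels in the numerator, delivering \eqref{probPH0}, and \eqref{probPHH} follows by complementation.

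The main obstacle, and the step demanding care, is the bookkeeping of the overshoot: one must correctly identify which barrier is crossed by a jump (hence overshot) and which is crossed continuously (hence hit exactly), and then justify by memorylessness that the overshoot is exponential of parameter $\mu$ and independent of the phase indicator, so that it contributes only the clean factor $\mu/\lambda$. A secondary technical point is the validity of optional stopping for the unbounded stopping times $T_0^*$ and $T_H^*(D)$, which is exactly the content of Proposition \ref{prop4.1} and of its analogue for $T_H^*(D)$; the concluding average over $D$ for the boundary $H$ is then a routine computation.
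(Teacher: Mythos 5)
Your proof is correct and follows essentially the same route as the paper: the Wald martingale identity of Proposition \ref{prop4.1} evaluated at the stopping time, the identification via memorylessness of the exponential ($\mu$) overshoot at the boundary crossed by a jump (versus exact hitting at the boundary reached by the drift), and the resulting linear equation(s) for the phase probabilities, followed by the same averaging over the first descent $D$ for the upper boundary. The only difference is cosmetic: you substitute the single root $\theta=\mu-\lambda$ of $\theta-\lambda\theta/(\mu-\theta)=0$ and pair it with $P_{0,0}+P_{0,H}=1$, whereas the paper substitutes both roots $\theta_{1,2}(\omega)$ of $\omega=\theta(\mu-\lambda-\theta)/(\mu-\theta)$, solves the $2\times 2$ system for the full transforms $F_{0,0}(\omega)$ and $F_{0,H}(\omega)$ (which it reuses later for the expected values in Proposition \ref{prop2}), and only then sets $\omega=0$, at which point its two roots are precisely your $\theta=0$ and $\theta=\mu-\lambda$.
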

\begin{proof}
For a given $i\in \mathbb{N}$, let $D_i\sim Exp(\mu)$ be the random duration of the downward period during which $X(t)$ crosses 
the origin for the first time. Hence, the following equalities in distribution hold:
\begin{equation}\label{5}
\begin{aligned} 
& Y(T_0^*){\bf 1}_{\left\{T_0^*=T_{0,0}\right\}}\stackrel{d}{=}T_{0,0}+D_i-D_i^*,\\
& Y(T_0^*){\bf 1}_{\left\{T_0^*=T_{0,H}\right\}}\stackrel{d}{=}T_{0,H}-H.
\end{aligned}
\end{equation}
Clearly, for the memoryless property, we have $D_i-D_i^*\sim Exp(\mu)$, with $D_i-D_i^*$ independent from $T_{0,0}$. 
Recalling Eq.\ \eqref{star} and thanks to \eqref{5}, Eq.\ \eqref{1} becomes, 
for all $\theta<\mu$,
\begin{equation}\label{7}
\frac{\mu}{\mu -\theta }\mathbb E\left[e^{\omega T_{0,0}}{\bf 1}_{\left\{T_0^*=T_{0,0}\right\}}\right]+e^{-H\theta}\mathbb E\left[e^{\omega T_{0,H}}{\bf 1}_{\left\{T_0^*=T_{0,H}\right\}}\right]=1,
\end{equation} 
where 
\begin{equation}\label{6}
 \omega:= 
 \frac{\theta\cdot(\mu -\lambda-\theta)}{\mu-\theta}. 
\end{equation}
From Eq.\ (\ref{6}) we have $\theta^2+\theta(\lambda-\mu-\omega)-\mu\omega=0$. 
Denoting by $\theta_1(\omega)\leq \theta_2(\omega)$ the roots 
of the latter equation, and recalling that 
$\theta<\mu$, cf.\ Proposition \ref{prop4.1}, we have
\begin{equation*}
\theta_{1}(\omega)\leq \theta_{2}(\omega)< \mu,
\end{equation*}
which is equivalent to
$$
 \omega\leq  \left(\sqrt{\lambda}-\sqrt{\mu}\right)^2.
$$ 
Let us now define
		\begin{equation*}
		F_{0,0}(\omega)=\mathbb E\left[e^{\omega T_{0,0}}{\bf 1}_{\left\{T_0^*=T_{0,0}\right\}}\right],
		\end{equation*}
		and
		\begin{equation*}
		F_{0,H}(\omega)=\mathbb E\left[e^{\omega T_{0,H}}{\bf 1}_{\left\{T_0^*=T_{0,H}\right\}}\right].
		\end{equation*}
		Substituting the roots $\theta_{1,2}(\omega)$ in Eq.\ \eqref{7}, we obtain the system
		\begin{equation*}
		\left\{\begin{matrix}
		&\frac{\mu}{\mu -\theta_1 (\omega)}F_{0,0}(\omega)+e^{-H\theta_1(\omega)}F_{0,H}(\omega)=1,\\
		&\frac{\mu}{\mu -\theta_2 (\omega)}F_{0,0}(\omega)+e^{-H\theta_2(\omega)}F_{0,H}(\omega)=1,
		\end{matrix}\right.
		\end{equation*}
		whose solutions are given by
		\begin{equation}\label{10}
		\begin{aligned}
		&F_{0,0}(\omega)=\frac{\left(e^{-H\theta_1(\omega)}-e^{-H\theta_2(\omega)}\right)\left(\mu-\theta_1(\omega)\right)\left(\mu-\theta_2(\omega)\right)}{\mu\left[\left(\mu-\theta_1(\omega)\right)e^{-H\theta_1(\omega)}-\left(\mu-\theta_2(\omega)\right)e^{-H\theta_2(\omega)}\right]},\\
		&F_{0,H}(\omega)=\frac{\theta_2(\omega)-\theta_1(\omega)}{\left(\mu-\theta_1(\omega)\right)e^{-H\theta_1(\omega)}-\left(\mu-\theta_2(\omega)\right)e^{-H\theta_2(\omega)}}.
		\end{aligned}
		\end{equation}
		Finally, by taking $\theta=0$ we have $\omega=0$, so that from 
		the second of Eqs.\ \eqref{10} one has
		\begin{equation*}
		P_{0,H}=F_{0,H}(0)=\frac{\lambda-\mu}{\lambda e^{H(\lambda-\mu)}-\mu}.
		\end{equation*}
		Hence, since $P_{0,0}=1-P_{0,H}$, we immediately obtain Eq.\ (\ref{probP00}).
		In the cases of Phase 3 and Phase 4, for a given $D$, the martingale equation reads
		\begin{equation}\label{11}
		e^{-\theta D}{F}_{H,H}(\omega|D)+e^{\theta (H-D)}\left(\frac{\mu}{\mu-\theta }\right){F}_{H,0}(\omega|D)=1,
		\qquad 
		\omega\leq  \left(\sqrt{\lambda}-\sqrt{\mu}\right)^2,
		\end{equation}
		where
		\begin{equation*}
		\begin{aligned}
		&{F}_{H,H}(\omega|D)=\mathbb E\left[e^{\omega T_{H,H}(D)}{\bf 1}_{\left\{T_H^*(D)=T_{H,H}(D)\right\}}\right],\\
		&{F}_{H,0}(\omega|D)=\mathbb E\left[e^{\omega T_{H,0}(D)}{\bf 1}_{\left\{T_H^*(D)=T_{H,0}(D)\right\}}\right].
		\end{aligned}
		\end{equation*}
		Following a similar reasoning, the solution of the system obtained substituting $\theta_1(\omega)$ and $\theta_2(\omega)$ in Eq.\ \eqref{11} 
		is given by
		\begin{equation}
		\begin{aligned}
		&{F}_{H,H}(\omega|D)=\frac{\left(\mu-\theta_2(\omega)\right)e^{\theta_1(\omega)(H-D)}-\left(\mu-\theta_1(\omega)\right)e^{\theta_2(\omega)(H-D)}}{\left(\mu-\theta_2(\omega)\right)e^{-\theta_1(\omega)(D-H)-\theta_2(\omega)D}-\left(\mu-\theta_1(\omega)\right)e^{-\theta_1(\omega)D-\theta_2(\omega)(D-H)}},\\		
		&{F}_{H,0}(\omega|D)=\frac{e^{-\theta_1(\omega)D}-e^{-\theta_2(\omega)D}}{\frac{\mu}{\mu-\theta_2(\omega)}e^{-\theta_1(\omega)D+\theta_2(\omega)(H-D)}-\frac{\mu}{\mu-\theta_1(\omega)}e^{-\theta_2(\omega)D+\theta_1(\omega)(H-D)}}.	
		\end{aligned}
		\label{10bis}
		\end{equation}
Hence, for $\theta=0$ one has  $\omega=0$, and thus 
\begin{equation}
P_{H,0}(D)=F_{H,0}(0|D)=\frac{\lambda\left(e^{(\mu-\lambda)D}-1\right)}{\mu e^{(\mu-\lambda)H}-\lambda},
\label{probPH0D}
\end{equation}
and thus
\begin{equation*}
P_{H,H}(D)=1-P_{H,0}(D)= \frac{\mu e^{(\mu-\lambda)H}-\lambda e^{(\mu-\lambda)D}}{\mu e^{(\mu-\lambda)H}-\lambda}.
\end{equation*}
Being $D$ an exponentially distributed random variable with parameter $\mu$, and recalling Eq.\ (\ref{probPH0D}), finally the probability $P_{H,0}$ can be 
evaluated as follows
\begin{equation*}
P_{H,0}=\mu\int_{0}^{H}e^{-\mu x}P_{H,0}(x) \,{\rm d}x
+\mu\int_H^{+\infty} e^{-\mu x}  \,{\rm d}x
=\frac{\mu-\lambda}{\mu e^{(\mu-\lambda)H}-\lambda}.
\end{equation*} 
Moreover, noting that $P_{H,H}=1-P_{H,0}$, we obtain Eq.\ (\ref{probPHH}).
This completes the proof. Note that, by symmetry, the probabilities $P_{H,0}$ and $P_{H,H}$ can be obtained also by interchanging $\lambda$ with $\mu$ 
in the expressions of $P_{0,0}$ and $P_{0,H}$, respectively.
\end{proof}
\par
Figures \ref{fig:Figura7}--\ref{fig:Figura7bis} provide the probabilities $P_{0,0}$ and $P_{H,H}$ for various choices of the parameters. They show that $P_{0,0}$ is increasing in $\lambda$ and $H$, and is decreasing in $\mu$. 
Similarly, $P_{H,H}$ is increasing in $\mu$ and $H$, and is decreasing in $\lambda$. 
\par
Thanks to Theorem \ref{teor1}, we immediately get the following result. 
\begin{corollary}\label{prop1}
For $\lambda=\mu$, the probabilities (\ref{probP0H})$\div$(\ref{probPHH}) are given by
\begin{equation*}
P_{0,0}=P_{H,H}=\frac{\mu H}{1+\mu H},
\end{equation*} 
and
\begin{equation*}
P_{0,H}=P_{H,0}=\frac{1}{1+\mu H}.
\end{equation*}
\end{corollary}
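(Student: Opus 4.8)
The plan is to obtain the symmetric case $\lambda=\mu$ as a limit of the expressions furnished by Theorem~\ref{teor1}. Each of the formulas \eqref{probP0H} and \eqref{probPH0} reduces to the indeterminate form $0/0$ as $\lambda\to\mu$, since both numerator and denominator vanish in that limit; the content of the corollary is precisely the evaluation of these limits. Accordingly, I would fix $\mu>0$, regard $P_{0,H}$ and $P_{H,0}$ as functions of $\lambda$, and compute $\lim_{\lambda\to\mu}P_{0,H}$ and $\lim_{\lambda\to\mu}P_{H,0}$.

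First I would treat $P_{0,H}=(\mu-\lambda)/(\mu-\lambda\,e^{(\lambda-\mu)H})$. Differentiating numerator and denominator with respect to $\lambda$ gives $-1$ and $-e^{(\lambda-\mu)H}(1+\lambda H)$, respectively, so that by L'H\^opital's rule
\begin{equation*}
\lim_{\lambda\to\mu}P_{0,H}=\frac{-1}{-(1+\mu H)}=\frac{1}{1+\mu H}.
\end{equation*}
Equivalently, one could substitute $\lambda=\mu-\varepsilon$ and expand $e^{-\varepsilon H}=1-\varepsilon H+O(\varepsilon^2)$, which turns the denominator into $\varepsilon(1+\mu H)+O(\varepsilon^2)$ and yields the same value. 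The identity $P_{0,0}=1-P_{0,H}$ from \eqref{4} then gives $P_{0,0}\to\mu H/(1+\mu H)$.

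An entirely analogous computation applied to $P_{H,0}=(\lambda-\mu)/(\lambda-\mu\,e^{(\mu-\lambda)H})$ produces $\lim_{\lambda\to\mu}P_{H,0}=1/(1+\mu H)$, and hence $P_{H,H}=1-P_{H,0}\to\mu H/(1+\mu H)$ by \eqref{4bis}. Since the four target values coincide with $\mu H/(1+\mu H)$ or $1/(1+\mu H)$ as claimed, this would complete the proof. There is no serious obstacle here beyond the care required in resolving the $0/0$ indeterminacy; the only point worth checking is that the limiting value is independent of the direction of approach (it is, by continuity of the one-variable limit), so that evaluating via either L'H\^opital's rule or a Taylor expansion is legitimate and gives a well-defined answer.
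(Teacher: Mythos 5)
Your proposal is correct and matches what the paper intends: the paper gives no explicit proof, merely asserting that the result follows ``immediately'' from Theorem \ref{teor1}, and the natural reading is exactly your resolution of the $0/0$ indeterminacy by letting $\lambda\to\mu$ in \eqref{probP0H} and \eqref{probPH0} and using $P_{0,0}=1-P_{0,H}$, $P_{H,H}=1-P_{H,0}$. Your L'H\^opital/Taylor computation is accurate, so nothing further is needed.
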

\par
%
	\begin{figure}[t]
	\centering
	\hspace*{-0.5cm}
	\subfigure[]{\includegraphics[width=5.5cm,height=4cm]{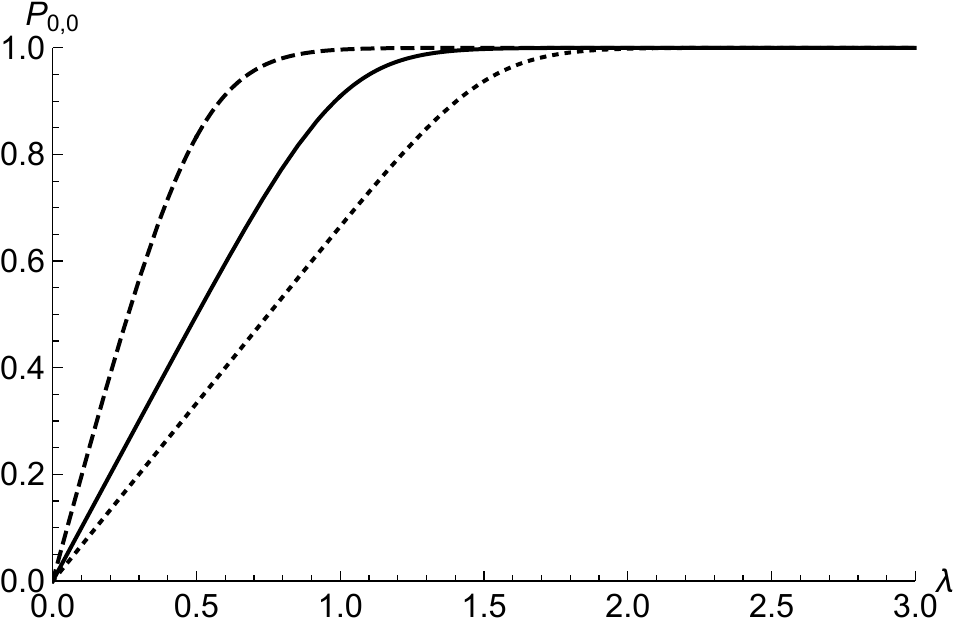}}\quad
	\subfigure[]{\includegraphics[width=6cm,height=4cm]{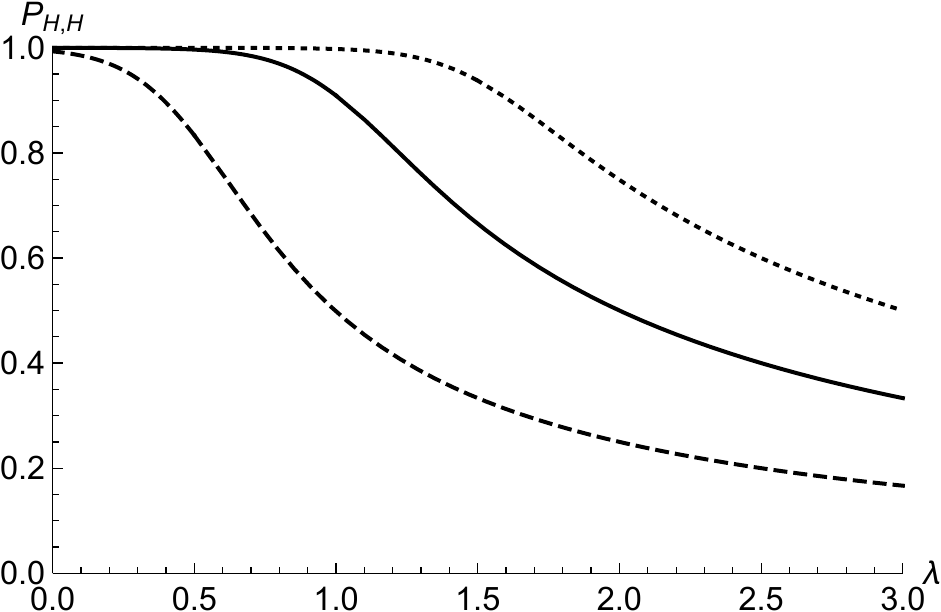}}\\
	\caption{Plot (a) (Plot (b)) shows $P_{0,0}$ ($P_{H,H}$) as a function of $\lambda$, for $\mu=0.5$ (dashed line), $\mu=1$ (solid line) and $\mu=1.5$ (dotted line). 
	In both cases it is $H=10$.}
	\label{fig:Figura7}
\end{figure}
%
\begin{figure}[t]
	\centering
	\hspace*{-0.5cm}
	\subfigure[]{\includegraphics[width=5.5cm,height=4cm]{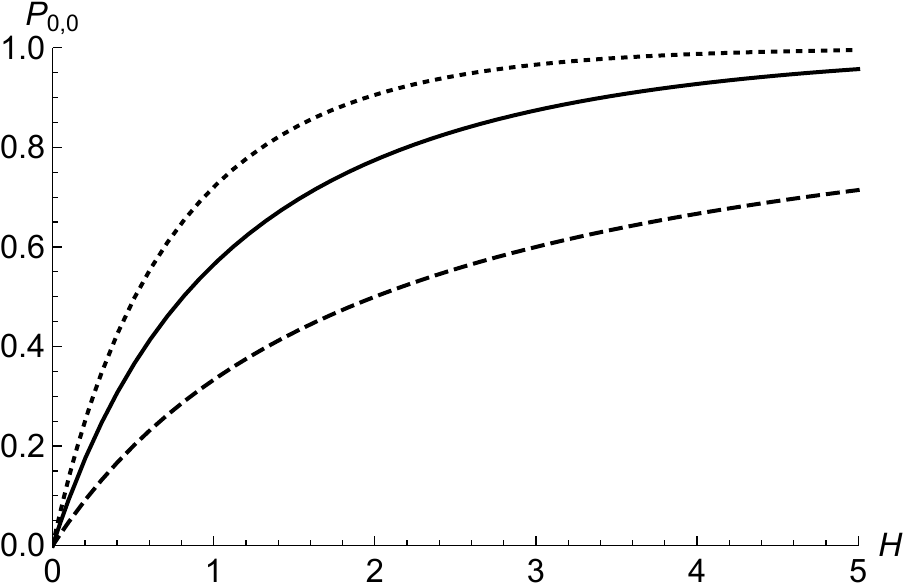}}\quad
	\subfigure[]{\includegraphics[width=6cm,height=4cm]{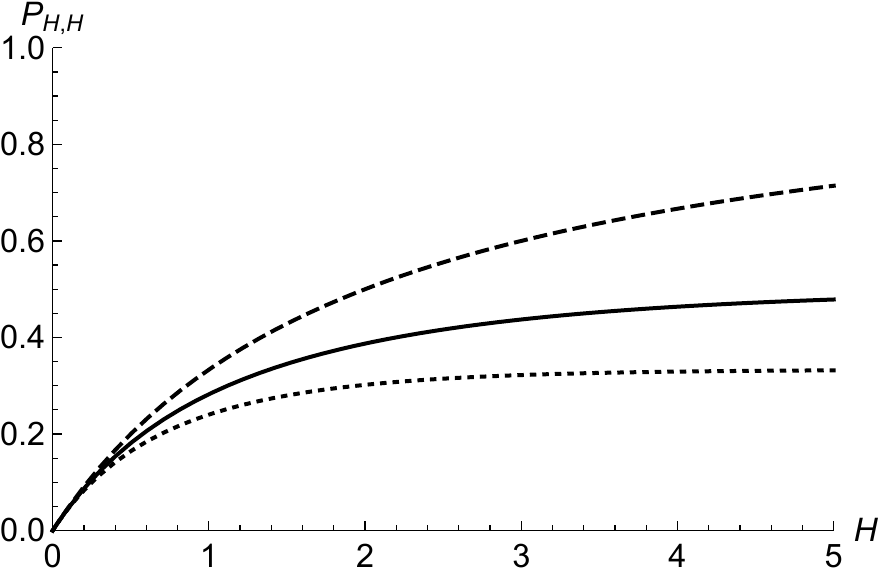}}\\
	\caption{Plot (a) (Plot (b)) shows $P_{0,0}$ ($P_{H,H}$) as a function of $H$, for $(\lambda,\mu)=(0.5,0.5)$ (dashed line), $(\lambda,\mu)=(1,0.5)$ (solid line) 
	and $(\lambda,\mu)=(1.5,0.5)$ (dotted line).}
	\label{fig:Figura7bis}
\end{figure}
%
\section{Expected Values of Stopping Times}\label{Sec5}
%
In this section we exploit the martingale equation given in Section \ref{Sec4} to obtain the explicit expression of the 
mean value of the first-passage times defined in (\ref{T00})$\div$(\ref{star}) and  (\ref{THH})$\div$(\ref{THstar}). 
We now focus on the stopping times $T_{u,v}$, for $u,v\in\{0,H\}$. Clearly, these are not honest random variables, whereas  
$T_{u,v}{\bf 1}_{\left\{T_u^*=T_{u,v}\right\}}$ are, for $u,v\in\{0,H\}$. 
With reference to Eq.\ (\ref{c00}) (Eq.\ (\ref{defchh})), we note that 
$T_{0,0}{\bf 1}_{\left\{T_0^*=T_{0,0}\right\}}$ ($T_{H,H}{\bf 1}_{\left\{T_H^*=T_{H,H}\right\}}$)
represents the half of the time that the process $X(t)$ spends to return to the origin (the boundary $H$) 
for the first time without hitting the boundary $H$ (the origin). 
The meaning of $T_{0,H}{\bf 1}_{\left\{T_0^*=T_{0,H}\right\}}$ and $T_{H,0}{\bf 1}_{\left\{T_H^*=T_{H,0}\right\}}$ 
is similar, due to Eqs.\  (\ref{c0h}) and (\ref{defch0}).  
\begin{proposition}\label{prop2}
	The expected values of $T_{0,H}{\bf 1}_{\left\{T_0^*=T_{0,H}\right\}}$ and $T_{0,0}{\bf 1}_{\left\{T_0^*=T_{0,0}\right\}}$ are given, for $\lambda\neq \mu$, by
	\begin{eqnarray*}
	&& \hspace*{-0.8cm}
	\mathbb E\left[T_{0,H}{\bf 1}_{\left\{T_0^*=T_{0,H}\right\}}\right]= 
	\frac{{\rm e}^{(\mu-\lambda)H}\left\{ 2\lambda \mu [{\rm e}^{(\mu-\lambda)H}-1] +H (\lambda-\mu) [\lambda^2+\mu^2 {\rm e}^{(\mu-\lambda)H}] \right\}}{(\lambda-\mu)(\lambda-\mu {\rm e}^{(\mu-\lambda)H})^2},
	\nonumber
	\\
	&& \hspace*{-0.8cm}
       \\
	&& \hspace*{-0.6cm}
    \mathbb E\left[T_{0,0}{\bf 1}_{\left\{T_0^*=T_{0,0}\right\}}\right]= \frac{\lambda\left\{\lambda-\mu e^{2H(\mu-\lambda)}-e^{H(\mu-\lambda)}(\lambda-\mu)\left[1+H(\lambda+\mu)\right]\right\}}{(\lambda-\mu)\left(\lambda-\mu e^{H(\mu-\lambda)}\right)^2}.
    \nonumber
	\\
	&& \hspace*{-0.8cm}
	\end{eqnarray*}
	Moreover, for the expected values of $T_{H,H}{\bf 1}_{\left\{T_H^*=T_{H,H}\right\}}$ and $T_{H,0}{\bf 1}_{\left\{T_H^*=T_{H,0}\right\}}$ we have, for $\lambda\neq \mu$,
	\begin{equation*}
	\mathbb E\left[T_{H,H}{\bf 1}_{\left\{T_H^*=T_{H,H}\right\}}\right]=\frac{\mu\left\{\lambda-\mu e^{2H(\mu-\lambda)}+(\mu-\lambda)e^{(\mu-\lambda)H}\left[1+H(\mu+\lambda)\right]\right\}}{(\lambda-\mu)\left(\lambda-\mu e^{(\mu-\lambda)H}\right)^2}, 
	\end{equation*} 
	and
	\begin{equation*}
	\mathbb E\left[T_{H,0}{\bf 1}_{\left\{T_H^*=T_{H,0}\right\}}\right]=\frac{\lambda\mu\left\{2+H(\mu-\lambda)+e^{H(\mu-\lambda)}\left[-2+H(\mu-\lambda)\right]\right\}}{(\mu-\lambda)\left(\lambda-\mu e^{(\mu-\lambda)H}\right)^2}.
	\end{equation*} 
\end{proposition}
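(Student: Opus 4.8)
The plan is to read off each required expectation as the first $\omega$-derivative, at $\omega=0$, of a generating function that is already available from the proof of Theorem \ref{teor1}. Writing $F_{0,0}(\omega)=\mathbb E\!\left[e^{\omega T_{0,0}}\mathbf 1_{\{T_0^*=T_{0,0}\}}\right]$ and $F_{0,H}(\omega)=\mathbb E\!\left[e^{\omega T_{0,H}}\mathbf 1_{\{T_0^*=T_{0,H}\}}\right]$, these are finite for $\omega\le(\sqrt{\lambda}-\sqrt{\mu})^2$, an interval containing $0$ in its interior whenever $\lambda\neq\mu$; being Laplace transforms of positive finite measures they are analytic there, so one may differentiate under the expectation to get $\mathbb E\!\left[T_{0,H}\mathbf 1_{\{T_0^*=T_{0,H}\}}\right]=F_{0,H}'(0)$ and $\mathbb E\!\left[T_{0,0}\mathbf 1_{\{T_0^*=T_{0,0}\}}\right]=F_{0,0}'(0)$, with the analogous identities for the level-$H$ phases. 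Since the closed forms in \eqref{10} and \eqref{10bis} depend on $\omega$ only through the two roots $\theta_1(\omega)\le\theta_2(\omega)$ of the quadratic coming from \eqref{6}, the whole problem reduces to a chain-rule computation in $\theta_1,\theta_2$.

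The two ingredients I need are the root values and their derivatives at $\omega=0$. Putting $\omega=0$ in \eqref{6} gives $\theta(\theta+\lambda-\mu)=0$, so $\{\theta_1(0),\theta_2(0)\}=\{0,\mu-\lambda\}$; these are \emph{distinct} exactly because $\lambda\neq\mu$, which is the hypothesis of the proposition and guarantees that $\theta_1,\theta_2$—and hence the $F$'s—are smooth at $\omega=0$ (the coincident-root case $\lambda=\mu$ being the degenerate situation excluded here and treated by the symmetric limit). Implicit differentiation of \eqref{6} then yields $\theta_i'(0)$ in closed form at each root. Rather than differentiate the bulky quotients in \eqref{10} directly, I would use the cleaner device of differentiating the martingale identity \eqref{7} with respect to $\theta$: because $\omega(\theta)$ vanishes at both $\theta=0$ and $\theta=\mu-\lambda$, evaluating the differentiated identity at these two preimages of $\omega=0$ produces a $2\times2$ linear system whose unknowns are precisely $F_{0,0}'(0)$ and $F_{0,H}'(0)$, and whose coefficients involve only $\mu$, $H$, the values of $\omega'(\theta)$ at those points, and the already-known probabilities $P_{0,0}=F_{0,0}(0)$, $P_{0,H}=F_{0,H}(0)$ from Theorem \ref{teor1}. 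One checks that the system's determinant is nonzero and solves it to obtain the first two formulas.

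For the level-$H$ phases I would apply the same differentiation-in-$\theta$ argument to \eqref{11}, evaluating at $\theta=0$ and $\theta=\mu-\lambda$ to solve for $F_{H,H}'(0\,|D)$ and $F_{H,0}'(0\,|D)$ as explicit functions of the first downward time $D$. Since the stated expectations are unconditional, I would then average over $D\sim\mathrm{Exp}(\mu)$: on $\{D\ge H\}$ the particle descends straight to the origin, so $T_{H,0}(D)=0$ and the $T_{H,H}$-indicator vanishes, whence that region contributes nothing and the averaging reduces to $\int_0^H\mu\, e^{-\mu D}F_{H,H}'(0\,|D)\,\mathrm dD$ and the companion integral for $F_{H,0}'$. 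The main obstacle throughout is purely computational: pushing the quotient derivatives to the compact closed forms claimed, and, in the level-$H$ case, evaluating the polynomial-times-exponential integrals over $(0,H)$ (e.g.\ terms such as $\int_0^H D\,e^{-\lambda D}\,\mathrm dD$) and verifying that everything collapses onto the stated expressions. A convenient running consistency check is that setting $\omega=0$ must reproduce the probabilities of Theorem \ref{teor1} and that letting $\lambda\to\mu$ recovers the symmetric limit.
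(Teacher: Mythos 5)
Your proposal is correct and follows essentially the same route as the paper: identify each expectation with $\partial_\omega F(\omega)\big|_{\omega=0}$ of the generating functions produced by the Wald martingale system, and for the level-$H$ phases average the conditional answer against the $\mathrm{Exp}(\mu)$ law of $D$ over $(0,H)$, the region $\{D\ge H\}$ contributing nothing. The only (immaterial) difference is that you differentiate the martingale identities \eqref{7} and \eqref{11} in $\theta$ and re-solve a $2\times 2$ linear system at the two preimages $\theta=0$ and $\theta=\mu-\lambda$ of $\omega=0$, whereas the paper differentiates the already-solved expressions \eqref{10} and \eqref{10bis} directly in $\omega$.
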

\begin{proof}
Recalling Eq.\ (\ref{10}), we have
\begin{equation*}
\begin{aligned}
\mathbb 
E\left[T_{0,H}{\bf 1}_{\left\{T_0^*=T_{0,H}\right\}}\right]&=\left.\frac{\partial}{\partial \omega}F_{0,H}(\omega)\right|_{\omega=0}\\
&=\frac{\lambda +\mu}{(\lambda-\mu)\left(\mu-\lambda e^{H(\lambda-\mu)}\right)}
+\frac{\mu+\mu^2H +\lambda e^{H(\lambda-\mu)}(1+\lambda H)}{\left(\mu-\lambda e^{H(\lambda -\mu)}\right)^2}.
\end{aligned}
\end{equation*}
Similarly,
\begin{equation*}
\begin{aligned}
\mathbb E\left[T_{0,0}{\bf 1}_{\left\{T_0^*=T_{0,0}\right\}}\right]&=
&\frac{\lambda\left[\lambda-\mu e^{2H(\mu-\lambda)}-e^{H(\mu-\lambda)}(\lambda-\mu)\left(1+H(\lambda+\mu)\right)\right]}{(\lambda-\mu)\left(\lambda-\mu e^{H(\mu-\lambda)}\right)^2}.
\end{aligned}
\end{equation*}
By setting 
\begin{equation*}
M_{H,H}(\lambda,\mu, H, D)=\mathbb E\left[T_{H,H}(D){\bf 1}_{\left\{T_H^*(D)=T_{H,H}(D)\right\}}\right]
\end{equation*}
and
\begin{equation*}
M_{H,0}(\lambda,\mu, H, D)=\mathbb E\left[T_{H,0}(D){\bf 1}_{\left\{T_H^*(D)=T_{H,0}(D)\right\}}\right],
\end{equation*}
from Eq.\ (\ref{10bis}) we have 
\begin{eqnarray*}
&& \hspace*{-1.8cm}
M_{H,H}(\lambda,\mu, H, D)=\left\{D\left(\lambda-\mu e^{(\mu-\lambda)H}\right)\left(\lambda^2e^{(\mu-\lambda)D}+\mu^2 e^{(\mu-\lambda)H}\right)\right.\\
&& \hspace*{0.5cm}
+ \left.\lambda\mu e^{(\mu-\lambda)H}\left(e^{(\mu-\lambda)D}-1\right)\left(2+H(\lambda+\mu)\right)\right\}\frac{1}{(\lambda-\mu)\left(\lambda-\mu e^{(\mu-\lambda) H}\right)^2},
\end{eqnarray*}
and
\begin{eqnarray*}
&& \hspace*{-1.4cm}
M_{H,0}(\lambda,\mu, H, D)=\lambda\left\{D\left(\mu+\lambda e^{(\mu-\lambda)D}\right)\left(\mu-\mu e^{(\mu-\lambda)H}\right) \right.\\
&& \hspace*{0.5cm}
+\left.\left(e^{(\mu-\lambda)D}-1\right)\left(\lambda+\lambda\mu H +e^{(\mu-\lambda)H}(1+\lambda H)\mu\right)\right\}
\frac{1}{(\lambda-\mu)\left(\lambda-\mu e^{(\mu-\lambda) H}\right)^2}.
\end{eqnarray*}
Hence, due to Eqs.\ (\ref{defchh}) and (\ref{defch0}), and recalling that, for $D\geq H$ it is $C_{H,H}=0$ and $C_{H,0}=H$, we finally obtain
\begin{eqnarray*}
&& \hspace*{-2.5cm}
\mathbb E\left[T_{H,H}{\bf 1}_{\left\{T_H^*=T_{H,H}\right\}}\right]
=\mu\int_0^H e^{-\mu x}M_{H,H}(\lambda, \mu, H, x) \,{\rm d}x\\
&& \hspace*{0.8cm}
=\frac{\mu\left\{\lambda-\mu e^{2H(\mu-\lambda)}+(\mu-\lambda)e^{(\mu-\lambda)H}\left[1+H(\mu+\lambda)\right]\right\}}{(\lambda-\mu)\left(\lambda-\mu e^{(\mu-\lambda)H}\right)^2}, 
\end{eqnarray*}
and
\begin{eqnarray*}
&& \hspace*{-2.5cm}
\mathbb E\left[T_{H,0}{\bf 1}_{\left\{T_H^*=T_{H,0}\right\}}\right]
=\mu\int_0^H e^{-\mu x}M_{H,0}(\lambda, \mu, H, x) \,{\rm d}x\\ 
&& \hspace*{0.7cm}
=\frac{\lambda\mu\left\{2+H(\mu-\lambda)+e^{H(\mu-\lambda)}\left[-2+H(\mu-\lambda)\right]\right\}}{(\mu-\lambda)\left(\lambda-\mu e^{(\mu-\lambda)H}\right)^2}.
\end{eqnarray*}
The proof is thus completed.
\end{proof}
\par
From Proposition \ref{prop2} one has the following 
\begin{corollary}
In the case $\lambda=\mu$, we have
\begin{eqnarray*}
	&& \hspace*{-0.8cm}
	\mathbb E\left[T_{0,H}{\bf 1}_{\left\{T_0^*=T_{0,H}\right\}}\right]= \frac{H(6+6\mu H+H^2 \mu^2)}{6(1+\mu H)^2},
	\\
	&& \hspace*{-0.8cm}
    \mathbb E\left[T_{0,0}{\bf 1}_{\left\{T_0^*=T_{0,0}\right\}}\right]=\mathbb E\left[T_{H,H}{\bf 1}_{\left\{T_H^*=T_{H,H}\right\}}\right]= \frac{\mu H^2 (3+2 \mu H)}{6 (1+\mu H)^2},
    \\
	&& \hspace*{-0.8cm}
\mathbb E\left[T_{H,0}{\bf 1}_{\left\{T_H^*=T_{H,0}\right\}}\right]=\frac{\mu^2 H^3 }{6(1+\mu H)^2}.
	\end{eqnarray*}
\end{corollary}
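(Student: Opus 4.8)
The plan is to obtain the four expressions as limits, as $\lambda\to\mu$, of the corresponding formulas established in Proposition \ref{prop2}. Each of those formulas carries a factor $(\lambda-\mu)$ (equivalently $(\mu-\lambda)$) in the denominator, while the bracketed numerator vanishes in the same limit; hence every expression is a removable $0/0$ singularity at $\lambda=\mu$, and the corollary amounts to resolving it. I would therefore set $\epsilon:=\mu-\lambda$ and expand both numerator and denominator as power series in $\epsilon$ about $\epsilon=0$, using $e^{(\mu-\lambda)H}=e^{\epsilon H}=\sum_{k\ge 0}(\epsilon H)^k/k!$ and $\lambda=\mu-\epsilon$ throughout.

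The cleanest piece is the common denominator, which has the shape $\pm(\lambda-\mu)\,(\lambda-\mu\,e^{(\mu-\lambda)H})^2$, the outer sign being $(\lambda-\mu)=-\epsilon$ for the $T_{0,H}$, $T_{0,0}$, $T_{H,H}$ entries and $(\mu-\lambda)=\epsilon$ for the $T_{H,0}$ entry. Since $\lambda-\mu\,e^{\epsilon H}=(\mu-\epsilon)-\mu(1+\epsilon H+O(\epsilon^2))=-\epsilon(1+\mu H)+O(\epsilon^2)$, one gets $(\lambda-\mu\,e^{(\mu-\lambda)H})^2=\epsilon^2(1+\mu H)^2+O(\epsilon^3)$, so the full denominator behaves like $\pm\,\epsilon^{3}(1+\mu H)^{2}+O(\epsilon^{4})$. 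This fixes the order of vanishing: for a finite nonzero limit each numerator must likewise be $O(\epsilon^{3})$, and the limit is read off as the ratio of the two $\epsilon^{3}$-coefficients.

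The main work is thus the third-order expansion of each numerator. For example, in the formula for $\mathbb{E}[T_{H,0}{\bf 1}_{\{T_H^*=T_{H,0}\}}]$ the bracket $2+H\epsilon+e^{\epsilon H}(-2+H\epsilon)$ has vanishing constant, linear, and quadratic terms, with leading behaviour $H^{3}\epsilon^{3}/6$; combined with the prefactor $\lambda\mu\to\mu^{2}$ and the denominator $\epsilon^{3}(1+\mu H)^2$ this yields $\mu^{2}H^{3}/[6(1+\mu H)^{2}]$, as claimed. Applying the same scheme to the other three numerators—expanding the prefactors $\lambda$, $\mu$, $\lambda\mu$ and every inner occurrence $\lambda=\mu-\epsilon$ consistently to the needed order—produces the remaining formulas.

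Finally, the equality $\mathbb{E}[T_{0,0}{\bf 1}_{\{T_0^*=T_{0,0}\}}]=\mathbb{E}[T_{H,H}{\bf 1}_{\{T_H^*=T_{H,H}\}}]$ at $\lambda=\mu$ follows either by comparing the two limits directly or, more conceptually, from the $\lambda\leftrightarrow\mu$ symmetry noted at the end of Theorem \ref{teor1}, which becomes an identity precisely when $\lambda=\mu$. The only genuine obstacle is the careful bookkeeping of the third-order Taylor coefficients, where cancellations of the lower-order terms must be verified; repeated application of L'H\^opital's rule is a possible alternative but is considerably more error-prone than the series approach, so I would favour the latter.
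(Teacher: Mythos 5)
Your proposal is correct and matches the paper's (implicit) argument: the corollary is stated as an immediate consequence of Proposition \ref{prop2}, i.e.\ precisely by resolving the removable singularity at $\lambda=\mu$, which your $\epsilon=\mu-\lambda$ expansion does properly (the denominator is $\pm\epsilon^3(1+\mu H)^2+O(\epsilon^4)$ and each numerator's Taylor coefficients through order $\epsilon^2$ do cancel, as in your worked $T_{H,0}$ case). No gaps.
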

\par
The expected values of the renewal cycles are provided hereafter.
%
\begin{figure}[t]
\vspace*{-0.5cm}
\centering
{\includegraphics[width=10cm,height=6cm]{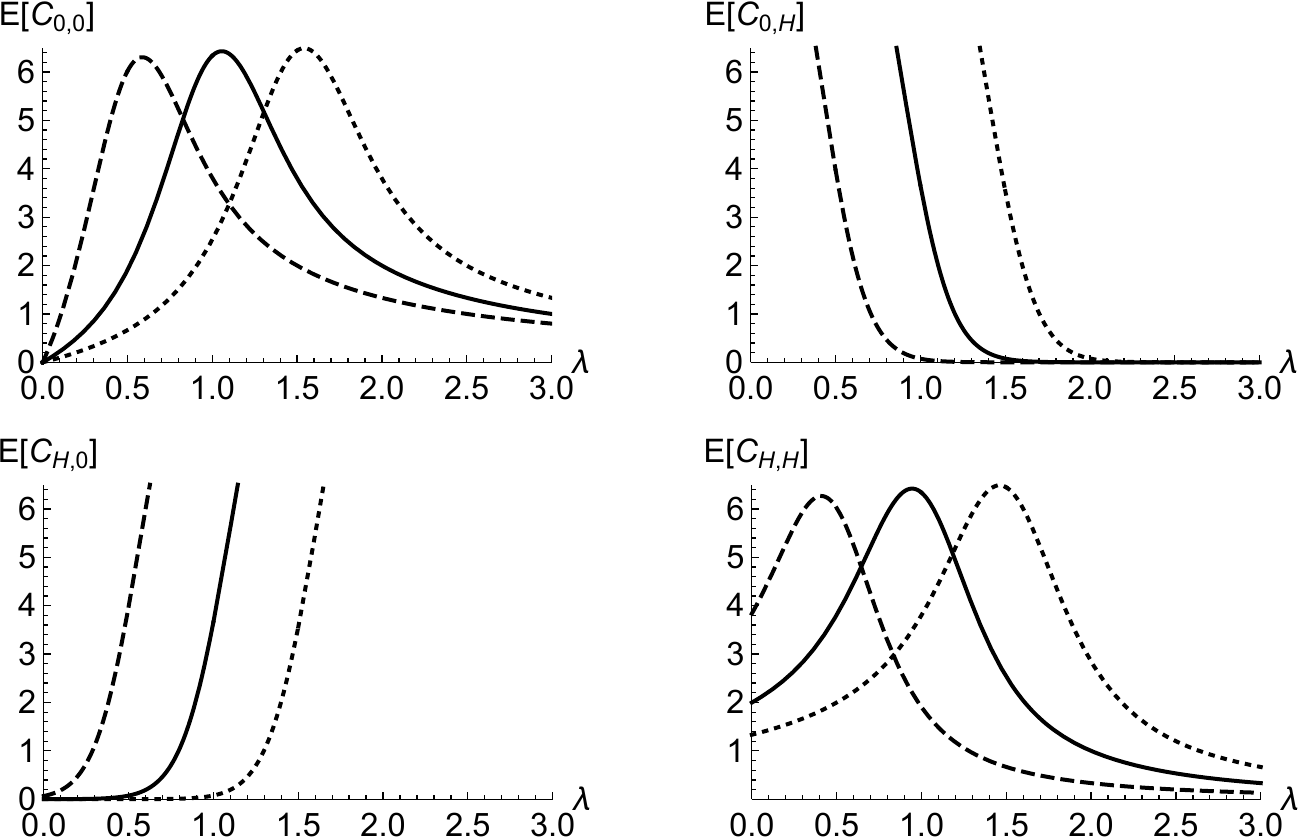}}
	\vfill
	\caption{The mean values of the renewal cycles as a function of $\lambda$, for $H$=10 with $\mu=0.5$ (dashed line), $\mu=1$ (solid line) 
	and $\mu=1.5$ (dotted line). }
	\label{fig:Cicli1}
\end{figure}
\begin{theorem}\label{teo3}
For the renewal cycles starting from the origin, for $\lambda\neq \mu$ the means are given by 
\begin{equation*}
\mathbb{E}(C_{0,0})=\frac{2 \lambda\left\{\lambda-\mu e^{2H(\mu-\lambda)}+(\mu-\lambda)e^{H(\mu-\lambda)}\left[1+H(\lambda+\mu)\right]\right\}}{(\lambda-\mu)\left(\lambda-\mu e^{H(\mu-\lambda)}\right)^2},
\end{equation*}
\begin{equation*}
\mathbb{E}(C_{0,H})=\frac{e^{H(\mu-\lambda)} 
\left\{4 \lambda \mu [-1+e^{H(\mu-\lambda)}]+H (\lambda^2-\mu^2) [\lambda +\mu e^{H(\mu-\lambda)}]\right\}}
{(\lambda-\mu)\left(\lambda-\mu e^{(\mu-\lambda)H}\right)^2}.
\end{equation*}
For the renewal cycles starting from $H$, for $\lambda\neq \mu$ the means are given by 
\begin{equation*}
\begin{aligned}
\mathbb{E}(C_{H,0})=\frac{
\mu e^{H(\lambda-\mu)} [\lambda (4+\lambda H)-\mu^2 H]+\lambda e^{2H (\lambda-\mu)} [-4 \mu+ H (\lambda^2-\mu^2)]}
{(\lambda-\mu)\left(\mu-\lambda e ^{H(\lambda-\mu)}\right)^2},
\end{aligned}
\end{equation*}
\begin{equation*}
\mathbb{E}(C_{H,H})=\frac{2 \mu\left\{\lambda-\mu e^{2H(\mu-\lambda)}+(\mu-\lambda)e^{H(\mu-\lambda)}\left[1+H(\mu+\lambda)\right]\right\}}{(\lambda-\mu)\left(\lambda-\mu e^{(\mu-\lambda)H}\right)^2}.
\end{equation*}
Moreover, in the special case $\lambda=\mu$, we have
\begin{equation*}
\mathbb{E}(C_{0,0})=\mathbb{E}(C_{H,H})=\frac{\mu H^2 (3+2 \mu H)}{3(1+\mu H)^2},
\end{equation*}
\begin{equation*}
\mathbb{E}(C_{0,H})=\mathbb{E}(C_{H,0})=\frac{H(3+3\mu H+\mu^2 H^2)}{3(1+\mu H)^2}.
\end{equation*}
\end{theorem}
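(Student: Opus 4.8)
The plan is to read off each mean renewal cycle directly from its defining identity, taking expectations and inserting the quantities already established in Proposition~\ref{prop2} and Theorem~\ref{teor1}. Indeed, the four renewal cycles are written in Eqs.~\eqref{c00}, \eqref{c0h}, \eqref{defchh} and \eqref{defch0} as linear combinations of the truncated stopping times $T_{u,v}{\bf 1}_{\left\{T_u^*=T_{u,v}\right\}}$ and of the indicators ${\bf 1}_{\left\{T_u^*=T_{u,v}\right\}}$, so linearity of expectation reduces everything to known pieces.

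First I would treat the two cycles that return to the starting boundary. Since $C_{0,0}=2\,T_{0,0}{\bf 1}_{\left\{T_0^*=T_{0,0}\right\}}$ and $C_{H,H}(D)=2\,T_{H,H}(D){\bf 1}_{\left\{T_H^*(D)=T_{H,H}(D)\right\}}$, taking expectations yields
\begin{equation*}
\mathbb{E}(C_{0,0})=2\,\mathbb{E}\!\left[T_{0,0}{\bf 1}_{\left\{T_0^*=T_{0,0}\right\}}\right],
\qquad
\mathbb{E}(C_{H,H})=2\,\mathbb{E}\!\left[T_{H,H}{\bf 1}_{\left\{T_H^*=T_{H,H}\right\}}\right],
\end{equation*}
where both right-hand sides (the $H$-cycle already integrated over the exponential law of $D$) are supplied verbatim by Proposition~\ref{prop2}. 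Doubling the corresponding formulas returns the two stated expressions immediately.

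Next I would handle the crossing cycles, whose defining relations carry an extra deterministic term. From $C_{0,H}=2\,T_{0,H}{\bf 1}_{\left\{T_0^*=T_{0,H}\right\}}-H\,{\bf 1}_{\left\{T_0^*=T_{0,H}\right\}}$ and $C_{H,0}=2\,T_{H,0}{\bf 1}_{\left\{T_H^*=T_{H,0}\right\}}+H\,{\bf 1}_{\left\{T_H^*=T_{H,0}\right\}}$, and since $\mathbb{E}[{\bf 1}_{\left\{T_0^*=T_{0,H}\right\}}]=P_{0,H}$ and $\mathbb{E}[{\bf 1}_{\left\{T_H^*=T_{H,0}\right\}}]=P_{H,0}$ by \eqref{4}--\eqref{4bis}, taking expectations gives
\begin{equation*}
\mathbb{E}(C_{0,H})=2\,\mathbb{E}\!\left[T_{0,H}{\bf 1}_{\left\{T_0^*=T_{0,H}\right\}}\right]-H\,P_{0,H},
\qquad
\mathbb{E}(C_{H,0})=2\,\mathbb{E}\!\left[T_{H,0}{\bf 1}_{\left\{T_H^*=T_{H,0}\right\}}\right]+H\,P_{H,0}.
\end{equation*}
Here I would insert the expectations from Proposition~\ref{prop2} together with $P_{0,H}$ and $P_{H,0}$ from Theorem~\ref{teor1}.

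The main obstacle is purely the algebraic consolidation of these crossing cycles, since the two contributions live over different denominators. The expectation $\mathbb{E}[T_{0,H}{\bf 1}]$ carries denominator $(\lambda-\mu e^{(\mu-\lambda)H})^2$, while $P_{0,H}=\tfrac{\mu-\lambda}{\mu-\lambda e^{(\lambda-\mu)H}}$; using the identity $\mu-\lambda e^{(\lambda-\mu)H}=-e^{(\lambda-\mu)H}\bigl(\lambda-\mu e^{(\mu-\lambda)H}\bigr)$ one rewrites $H\,P_{0,H}=\tfrac{H(\lambda-\mu)e^{(\mu-\lambda)H}}{\lambda-\mu e^{(\mu-\lambda)H}}$ and brings both terms over the common denominator $(\lambda-\mu)(\lambda-\mu e^{(\mu-\lambda)H})^2$. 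This conversion is exactly what produces the global factor $e^{H(\mu-\lambda)}$ in $\mathbb{E}(C_{0,H})$, and after cancellation the numerators combine into the claimed polynomial-in-$H$ form; the same manipulation with $\lambda$ and $\mu$ interchanged delivers $\mathbb{E}(C_{H,0})$. Finally, for the degenerate case $\lambda=\mu$ I would avoid the $0/0$ limit altogether and instead substitute the $\lambda=\mu$ values of the constituent quantities furnished by the Corollary following Proposition~\ref{prop2} and by Corollary~\ref{prop1}; forming the very same linear combinations as above and simplifying reduces each expression to the stated rational function of $\mu H$.
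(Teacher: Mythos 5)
Your proposal is correct and follows essentially the same route as the paper, whose proof simply states that the result follows from Theorem \ref{teor1}, Proposition \ref{prop2} and Eqs.\ \eqref{c00}, \eqref{c0h}, \eqref{defchh} and \eqref{defch0}; you have merely made explicit the linear combinations $\mathbb{E}(C_{0,0})=2\,\mathbb{E}[T_{0,0}{\bf 1}]$, $\mathbb{E}(C_{0,H})=2\,\mathbb{E}[T_{0,H}{\bf 1}]-H P_{0,H}$, and their analogues from $H$, together with the algebra needed to reach the stated closed forms. The handling of the $\lambda=\mu$ case via the two corollaries is likewise consistent with the paper.
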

\begin{proof}
The proof follows easily from Theorem \ref{teor1}, Proposition \ref{prop2} and recalling Eqs.\ \eqref{c00}, \eqref{c0h}, \eqref{defchh} and \eqref{defch0}.
\end{proof}
\par
Figures \ref{fig:Cicli1} and \ref{fig:Cicli2} provide some plots of the expected values of renewal cycles for different choices of the parameters. 
\par
\begin{remark}
From the first result of Theorem \ref{teo3} it is not hard to see that, for $\lambda>\mu$, 
$$
 \lim_{H\to \infty} \mathbb{E}(C_{0,0})=\frac{2}{\lambda-\mu},
$$
which is equal to the mean return time to the origin of the telegraph process in the presence of a single boundary at 0 
(cf.\ Proposition 3 of \cite{DiCrescenzo}). 
\end{remark}
\par
\begin{remark}\label{eq:Remark5.2}
Consider the process $\widetilde X_c(t)=c\,X(t)$, $t\geq 0$. Clearly, this is an asymmetric telegraph process with velocity $c$ and 
parameters $\lambda$, $\mu$. With reference to the stopping times $C_{0,v}$, $v\in \{0,H\}$, introduced in Section \ref{sec2}, 
we now denote by $\widetilde C_{0,v}$, $v\in \{0,H\}$, the corresponding stopping times for the process $\widetilde X_c(t)$. 
Since the hitting times of $\widetilde X_c(t)$ though $\{0, H\}$ correspond to the hitting times 
of $X(t)$ though $\{0, H/c\}$, we clearly have $\mathbb{E}(\widetilde C_{0,v})=\mathbb{E}(C_{0,v})\big|_{H=H/c}$, 
with $\mathbb{E}(C_{0,v})$, $v\in \{0,H\}$, given in Theorem \ref{teo3}. Hence, 
it is not hard to see that 
\begin{equation}
 \lim_{\sc SC}\mathbb{E}(\widetilde C_{0,v})=0, \qquad v\in \{0,H\},
 \label{eq:limitSC}
\end{equation}
where $\lim_{\sc SC}$ means that the limit is performed under the scaling conditions that allow $\widetilde X_c(t)$ to converge to the 
Wiener process with drift $\eta=\beta-\alpha$ and infinitesimal variance $\sigma^2$, i.e.\ (see Section 5 of L\'opez and Ratanov \cite{Lopez})
\begin{equation}
 c\to \infty, 
 \qquad 
 \lambda\sim \frac{1}{\sigma^2}\left(c^2+2\alpha c\right),
 \qquad 
 \mu  \sim \frac{1}{\sigma^2}\left(c^2+2\beta c\right), 
 \qquad \alpha,\beta>0.
 \label{eq:scaling}
\end{equation}
Let us define the stopping time 
\begin{equation}
 \widetilde\tau:=\inf\{t>0: \widetilde X_c(t)\in \{0,H\}\}\equiv \inf\left\{t>0:   X(t)\in \left\{0,\frac{H}{c}\right\}\right\}, 
 \qquad \widetilde X_c(0)=X(0)=0.
 \label{eq:deftau}
\end{equation}
Clearly, we have $\widetilde\tau=\min\{\widetilde C_{0,0},\widetilde C_{0,H}\}$ and thus, 
from (\ref{eq:limitSC}), $\lim_{\sc SC}\mathbb{E}(\widetilde\tau)=0$.  
This is in agreement with the analogous result for the above mentioned limiting Wiener process, 
for which the corresponding stopping time is equal to 0 w.p.\ 1, and is also confirmed by 
the analysis performed in Section 2.1 of Domin\'e \cite{Domine95}. 
The cases when the initial state is equal to the upper boundary can be treated similarly. 
\end{remark}
%
\begin{figure}[t]
\vspace*{-0.5cm}
\centering
{\includegraphics[width=10cm,height=6cm]{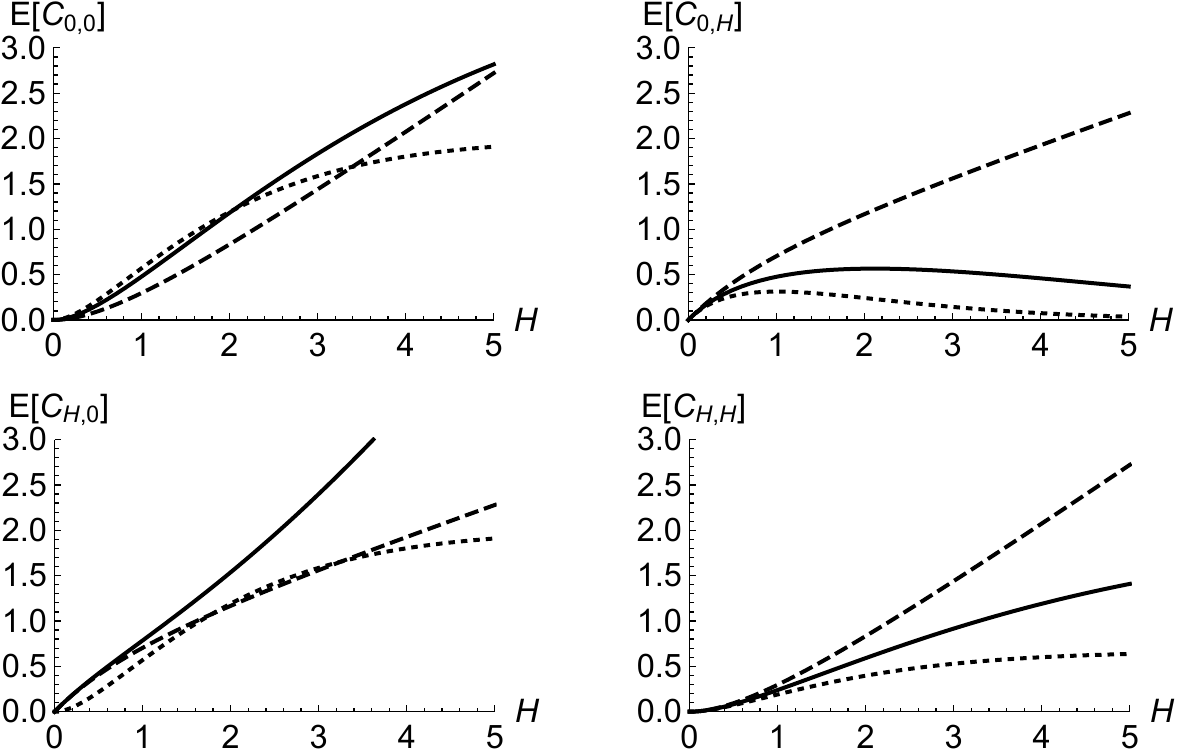}}
	\vfill
	\caption{The mean values of the renewal cycles given in Theorem \ref{teo3} as a function of $H$, 
	for $(\lambda,\mu)=(0.5,0.5)$ (dashed line), $(\lambda,\mu)=(1,0.5)$ (solid line) 
	and $(\lambda,\mu)=(1.5,0.5)$ (dotted line). }
	\label{fig:Cicli2}
\end{figure}
%
%
%
%
\section{Expected Time till Absorption}\label{Sec6}
Let us denote by $M$ the random variable counting the number of phases completed until absorption. Hence, recalling the model described in Section \ref{sec2}, $M$ has geometric distribution with parameter $\alpha\in(0,1)$, where $\alpha$ is the switching   
parameter, being the particles absorbed with probability $\alpha$, or reflected upwards (downwards) with probability $1-\alpha$.
Note that the random variables $M, C_{0,0}, C_{0,H}, C_{H,H}, C_{H,0}$ are mutually independent. 
\par
Let us denote by $L_n$ the expected length of a sample path leading to the absorption in the origin $0$ or in the level $H$ when $M=n$. 
We assume that $S_{0,0}$ is the sample path corresponding to Phase 1, $S_{0,H}$ the sample path for Phase 2, $S_{H,H}$ that for Phase 3, and $S_{H,0}$ the sample path corresponding to Phase 4. For example, if $M=1$ we have two possible sample paths, i.e.\ $S_{0,0}$ and $S_{0,H}$; if $M=2$ there are four possible sample paths: $\left\{S_{0,0}, S_{0,0}\right\}$, $\left\{S_{0,0}, S_{0,H}\right\}$, $\left\{S_{0,H}, S_{H,H}\right\}$ and $\left\{S_{0,H}, S_{H,0}\right\}$, and so on. 
Due to independence, the probability of each sample path is the product of the corresponding probabilities of phases within the sample path. So, the expected length of a sample path is the sum of the expected lengths of the corresponding phases. 
\par
In order to provide a formal expression of the expected lengths $L_n$, let us now introduce the matrix 
\begin{equation}
 {\bf P}=\left(
\begin{array}{cc}
 P_{0,0} & P_{0,H} \\
 P_{H,0} & P_{H,H} 
\end{array}
 \right)
\label{eq:Pmatr}
\end{equation}
and denote its $j$-th power by 
\begin{equation}
 {\bf P}^{(j)}=\left(
\begin{array}{cc}
 P_{0,0}^{(j)} & P_{0,H}^{(j)} \\
 P_{H,0}^{(j)} & P_{H,H}^{(j)}
\end{array}
 \right),
\label{eq:Pjmatr}
\end{equation}
for $j\in\mathbb{N}$. 
Moreover, let us now set 
\begin{equation}
Q^{(i,m)}_{u,v}=\sum_{j=i}^{m}P^{(j)}_{u,v},\qquad u,v\in\left\{0,H\right\},
\qquad i,m\in\mathbb N,\quad i\leq m.
\label{defQ}
\end{equation}
\begin{proposition}
The expected length of a sample path of $X(t)$ leading to the absorption in one of the boundaries 
during the  $n$-th phase can be expressed as
	\begin{equation}
	L_1=c_{0,0}P_{0,0}+c_{0,H}P_{0,H},
	\label{defL1}
	\end{equation} 
	and for $n\ge 2$
		\begin{equation}\label{th}
		\begin{aligned}
		L_n&=L_1\left[(1+P_{0,0})+P_{0,0}^{(2)}Q^{(0, n-3)}_{0,0}+P_{0,H}^{(2)}Q^{(1, n-3)}_{H,0}\right]\\
		&+L^*_1\left[P_{0,H}+P_{0,0}^{(2)}Q^{(1, n-3)}_{0,H}+P_{0,H}^{(2)}Q^{(0, n-3)}_{H,H}\right],
		\end{aligned}
		\end{equation}
where $P_{u,v}$ and $Q^{(i,m)}_{u,v}$, $u,v\in\left\{0,H\right\}$, are defined 
respectively in Eqs.\ (\ref{4}), (\ref{4bis}), and (\ref{defQ}), and where we have set 
$c_{u,v}=\mathbb E\left[C_{u,v}\right]$, for $u,v\in\{0,H\}$, and $L_1^*=c_{H,0}P_{H,0}+c_{H,H}P_{H,H}$.
\label{induz}
\end{proposition}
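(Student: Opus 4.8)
The plan is to read the successive endpoints visited by the particle at the completions of the phases as a two-state Markov chain on $\{0,H\}$, and to compute $L_n$ as the expected total duration of its first $n$ steps. First I would record that, since $X(0)=0$, this chain starts deterministically in state $0$, that its one-step transition matrix is exactly $\mathbf P$ of \eqref{eq:Pmatr}, and that $\mathbb P\{\text{state after } k-1 \text{ phases}=u\}=P^{(k-1)}_{0,u}$, the relevant entry of $\mathbf P^{(k-1)}$ in \eqref{eq:Pjmatr}. Because $M$ is geometric and, by hypothesis, independent of the phase transitions and of the renewal cycles $C_{u,v}$, conditioning on $M=n$ only fixes the number of completed phases to $n$ without altering the law of the phase chain or of the cycle durations; hence $L_n$ equals the expected sum of the durations of the first $n$ phases. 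With this reading, $L_1$ and $L_1^*$ are precisely the expected one-phase durations started from $0$ and from $H$, respectively.

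Next I would express $L_n$ additively. By the law of total expectation, the expected duration of the $k$-th phase equals the expected one-phase duration started from the (random) state occupied after $k-1$ phases, namely $P^{(k-1)}_{0,0}L_1+P^{(k-1)}_{0,H}L_1^*$. Summing over $k=1,\dots,n$ and reindexing $j=k-1$ gives
\[
 L_n=L_1\sum_{j=0}^{n-1}P^{(j)}_{0,0}+L_1^*\sum_{j=0}^{n-1}P^{(j)}_{0,H},
\]
where the $k=1$ term reproduces $L_1$ because $\mathbf P^{(0)}$ is the identity. Equivalently one may argue by induction on $n$, since $L_n-L_{n-1}=P^{(n-1)}_{0,0}L_1+P^{(n-1)}_{0,H}L_1^*$.

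Then I would convert each power sum into the $Q$-form of \eqref{defQ}. I peel off the $j=0$ and $j=1$ contributions, using $P^{(0)}_{0,0}=1$, $P^{(0)}_{0,H}=0$ and $P^{(1)}_{0,v}=P_{0,v}$; these produce the boundary terms $1+P_{0,0}$ and $P_{0,H}$. On the remaining tail $\sum_{j=2}^{n-1}$ I apply the Chapman--Kolmogorov identity $P^{(j)}_{0,v}=\sum_{w\in\{0,H\}}P^{(2)}_{0,w}P^{(j-2)}_{w,v}$, interchange the two finite summations, and reindex $i=j-2$ ranging over $0,\dots,n-3$. This turns the tail into $P^{(2)}_{0,0}\sum_i P^{(i)}_{0,v}+P^{(2)}_{0,H}\sum_i P^{(i)}_{H,v}$, and each inner sum is a $Q^{(\cdot,n-3)}$; collecting everything reproduces the two bracketed expressions in \eqref{th}.

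The one delicate point is the lower limit of each $Q$-sum, which is governed by whether the zeroth-power term $P^{(0)}_{w,v}=\delta_{w,v}$ survives: for $w=v$ it contributes and the sum starts at $i=0$, whereas for $w\neq v$ the $i=0$ term vanishes and the sum starts at $i=1$. This is exactly what distinguishes $Q^{(0,n-3)}_{0,0},Q^{(0,n-3)}_{H,H}$ from $Q^{(1,n-3)}_{H,0},Q^{(1,n-3)}_{0,H}$, and it accounts for the asymmetry between the $(1+P_{0,0})$ and $P_{0,H}$ boundary terms. I would finish by checking the degenerate ranges $n=2,3$, where the $Q$-sums are empty or single-term, under the convention that $Q^{(i,m)}=0$ when $i>m$.
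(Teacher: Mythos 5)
Your proposal is correct and follows essentially the same route as the paper: both rest on the identity that the expected duration of the $k$-th phase is $P^{(k-1)}_{0,0}L_1+P^{(k-1)}_{0,H}L_1^*$, which the paper packages as the recursion $L_n=L_{n-1}+L_1P^{(n-1)}_{0,0}+L_1^*P^{(n-1)}_{0,H}$ and resolves by induction, while you sum it directly and reach the $Q$-form via Chapman--Kolmogorov --- the same computation in different clothing. If anything, your Markov-chain reading supplies the justification for that recursion, which the paper merely asserts, and your boundary checks at $n=2,3$ match the paper's explicit verification of the base case.
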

\begin{proof}
The expression of $L_1$ given in Eq.\ (\ref{defL1}) follows immediately from the definition. For $n\ge 2$ we prove the result by induction. Since
\begin{equation*}
L_2=2c_{0,0}P_{0,0}^2+(c_{0,0}+c_{0,H})P_{0,0}P_{0,H}+(c_{0,H}+c_{H,H})P_{0,H}P_{H,H}+(c_{0,H}+c_{H,0})P_{0,H}P_{H,0}, 
\end{equation*}
and being $P_{i,0}+P_{i,H}=1$ for $i\in\{0,H\}$, Eq.\ \eqref{th} holds for $n=2$. Noting that, for all $n\ge 2$, it is
$$
L_n=L_{n-1}+L_1P_{0,0}^{(n-1)}+L^*_1P_{0,H}^{(n-1)},
$$
and making use of the induction hypothesis, we obtain
\begin{equation*}
\begin{aligned}
	L_{n+1}&=L_n+L_1P_{0,0}^{(n)}+L^*_1P_{0,H}^{(n)}\\
	&=L_2+L_1P_{0,0}^{(2)}Q^{(0, n-3)}_{0,0}+L_1P_{0,H}^{(2)}Q^{(1, n-3)}_{H,0}+L^*_1P_{0,0}^{(2)}Q^{(1, n-3)}_{0,H}+L^*_1P_{0,H}^{(2)}Q^{(0, n-3)}_{H,H}\\
	&+L_1\left[P_{0,0}^{(2)}P_{0,0}^{(n-2)}+P_{0,H}^{(2)}P_{H,0}^{(n-2)}\right]+L^*_1\left[P_{0,0}^{(2)}P_{0,H}^{(n-2)}+P_{0,H}^{(2)}P_{H,H}^{(n-2)}\right]\\
	&=L_2+L_1P_{0,0}^{(2)}Q^{(0, n-2)}_{0,0}+L_1P_{0,H}^{(2)}Q^{(1, n-2)}_{H,0}+L^*_1P_{0,0}^{(2)}Q^{(1, n-2)}_{0,H}+L^*_1P_{0,H}^{(2)}Q^{(1,n-2)}_{H,H}\!\!,
\end{aligned}
\end{equation*}
so that the proposition immediately follows.
\end{proof}
\par
Let us now obtain the formal expression of the $j$-th power of matrix (\ref{eq:Pmatr}). 
\begin{proposition}\rm
The matrix  (\ref{eq:Pjmatr}) can be expressed as 
\begin{equation}
 {\bf P}^{(j)}=\left(
\begin{array}{cc}
\frac{P_{H,0}+P_{0,H}\vartheta^j }{P_{H,0}+P_{0,H}} & \frac{P_{0,H}-P_{0,H}\vartheta^j }{P_{H,0}+P_{0,H}} \\[0.4cm]
\frac{P_{H,0}-P_{H,0}\vartheta^j }{P_{H,0}+P_{0,H}} & \frac{P_{0,H}+P_{H,0}\vartheta^j }{P_{H,0}+P_{0,H}} 
\end{array}
 \right),
\label{eqPj2}
\end{equation}
where we have set
\begin{equation}
 \vartheta=\frac{P_{0,0}-P_{0,H}+P_{H,H}-P_{H,0}}{2}\equiv P_{0,0}+P_{H,H}-1.
 \label{defvartheta}
\end{equation}
\end{proposition}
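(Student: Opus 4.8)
The plan is to recognize that $\bf P$ is a $2\times 2$ row-stochastic matrix: the complementarity relations $P_{0,0}+P_{0,H}=1$ and $P_{H,0}+P_{H,H}=1$ recorded in \eqref{4} and \eqref{4bis} say precisely that each row sums to one. Hence $\bf P$ admits the right eigenvector $(1,1)^{\T}$ with eigenvalue $1$, and its second eigenvalue is $\mathrm{tr}({\bf P})-1=P_{0,0}+P_{H,H}-1=\vartheta$, in agreement with \eqref{defvartheta}. Since the explicit expressions of Theorem \ref{teor1} yield $P_{0,H},P_{H,0}\in(0,1)$, we have $P_{0,H}+P_{H,0}>0$ and $\vartheta\in(-1,1)$, so the two eigenvalues are distinct and $\bf P$ is diagonalizable; this is what makes the closed form possible.

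The main step is then a spectral decomposition. I would first compute the left eigenvector of $\bf P$ for the eigenvalue $1$: solving $\pi{\bf P}=\pi$ gives $\pi\propto(P_{H,0},P_{0,H})$. Using $P_{0,0}-\vartheta=P_{H,0}$ and $P_{H,H}-\vartheta=P_{0,H}$ (both immediate from the row-sum relations and \eqref{defvartheta}), a short calculation gives the right eigenvector $(P_{0,H},-P_{H,0})^{\T}$ for the eigenvalue $\vartheta$. From the eigenvalue-$1$ data one forms the spectral projector
$$
\Pi_1=\frac{1}{P_{H,0}+P_{0,H}}\begin{pmatrix}P_{H,0}&P_{0,H}\\ P_{H,0}&P_{0,H}\end{pmatrix},
\qquad \Pi_\vartheta=I-\Pi_1,
$$
which are complementary idempotents, i.e.\ $\Pi_1^2=\Pi_1$, $\Pi_\vartheta^2=\Pi_\vartheta$ and $\Pi_1\Pi_\vartheta=\Pi_\vartheta\Pi_1=0$. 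Because the spectral decomposition reads ${\bf P}=\Pi_1+\vartheta\,\Pi_\vartheta$, raising to the $j$-th power collapses to ${\bf P}^{(j)}=\Pi_1+\vartheta^{j}\Pi_\vartheta$ for every $j\in\mathbb N$; reading off the four entries reproduces \eqref{eqPj2} verbatim.

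An equally valid elementary route is induction on $j$: the base case $j=1$ is the short check that the right-hand side of \eqref{eqPj2} collapses to $\bf P$ (using $\vartheta=P_{0,0}-P_{H,0}$ and the row-sum relations), and the step ${\bf P}^{(j+1)}={\bf P}^{(j)}{\bf P}$ becomes, entry by entry, a family of scalar identities that follow from $P_{0,0}-\vartheta=P_{H,0}$, $P_{H,H}-\vartheta=P_{0,H}$ together with $\vartheta^{j+1}=\vartheta\cdot\vartheta^{j}$. I expect the only genuine obstacle to be the degenerate case $P_{0,H}=P_{H,0}=0$, in which the common denominator in \eqref{eqPj2} vanishes; but this corresponds to ${\bf P}=I$ and is excluded here, since Theorem \ref{teor1} delivers strictly positive phase probabilities, so it need not be treated.
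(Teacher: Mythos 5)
Your proof is correct and takes essentially the same route as the paper's: both rest on diagonalizing ${\bf P}$ with eigenvalues $1$ and $\vartheta$, the paper writing ${\bf P}={\bf B}\,\mathrm{diag}(\vartheta,1)\,{\bf B}^{-1}$ explicitly and computing ${\bf B}^{-1}$, while you package the identical decomposition as ${\bf P}=\Pi_1+\vartheta\,\Pi_\vartheta$ via spectral projectors (with the minor bonus that row-stochasticity hands you the eigenvalue $1$ without forming the characteristic polynomial). The degenerate case is handled the same way in both: the paper assumes $0<P_{u,v}<1$, you exclude $P_{0,H}=P_{H,0}=0$.
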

\begin{proof}
Recalling the probabilities obtained in Theorem \ref{teor1}, it is easy to note that 
the matrix
	\begin{equation*}
	\bold P=\begin{pmatrix}
	P_{0,0}  &P_{0,H} \\ 
	P_{H,0} &P_{H,H} 
	\end{pmatrix}
	\end{equation*} 
has characteristic polynomial  given by
$p(\lambda)=(\lambda-\vartheta)(\lambda-1)$,
with $\vartheta$ given in (\ref{defvartheta}). 
For the sake of non triviality, we consider the case of $0<P_{u,v}<1$ with $u,v\in\{0,H\}$ and thus $\bold P$ 
admits of the following factorization: 
	$$
	{\bf P}={ \bf B}\cdot \left(
	\begin{array}{cc}
	\vartheta &0 \\
	0 & 1
	\end{array}
	\right) \cdot { \bf B}^{-1},
	$$
	where 
	$$
	{\bf B}=\left(
	\begin{array}{cc}
	-\frac{P_{0,H}}{P_{H,0}} &1 \\
	1 & 1
	\end{array}
	\right).
	$$
Hence, noting that 
$$
 {\bf B}^{-1}=\left(
\begin{array}{cc}
 -\frac{P_{H,0}}{P_{H,0}+P_{0,H}} & \frac{P_{H,0}}{P_{H,0}+P_{0,H}} \\[0.4cm]
\frac{P_{H,0}}{P_{H,0}+P_{0,H}} & \frac{P_{0,H}}{P_{H,0}+P_{0,H}}
\end{array}
 \right),
$$
Eq.\ (\ref{eqPj2}) follows from (\ref{eq:Pjmatr}) by means of straightforward calculations.
\end{proof}
%
%
\begin{figure}[t]
	\centering
	{\includegraphics[scale=0.8]{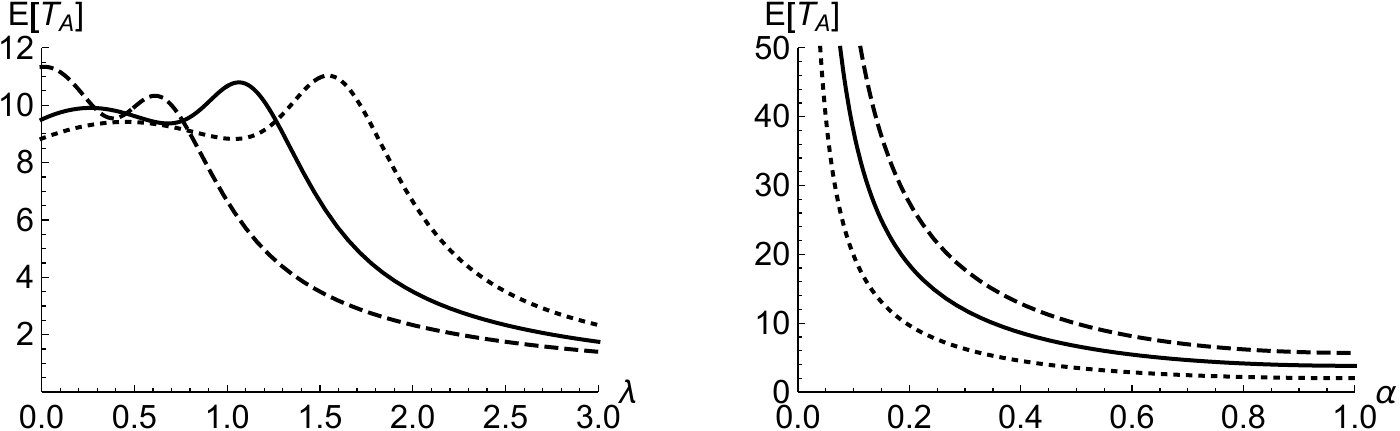}}
	\caption{Left: $\mathbb E(T_ A)$ as a function of $\lambda$ for $\alpha=0.5$ and $\mu=0.5$ (dashed line), $\mu=1$ (solid line), $\mu=1.5$ (dotted line). 
	Right: $\mathbb E(T_ A)$ as a function of $\alpha$ for $\mu=0.5$ and $\lambda=0.5$ (dashed line), $\lambda=1$ (solid line), $\lambda=1.5$ (dotted line). In both cases $H=10$.}
	\label{fig:FiguraMediaTa}
\end{figure}
\par
Let us now denote by ${T_ A}$ the time till absorption in the origin or in the boundary $H$, 
i.e.
\begin{equation}\label{defTA}
 T_ A=\inf\{s>0: X(t)\in\{0,H\} \quad \forall \;t>s\}. 
\end{equation}
In the next theorem we provide its expected value.
\begin{theorem}
For $0<\alpha<1$, the mean time till absorption in the origin or in the boundary $H$ is 
\begin{equation}\label{timeabsorption}
\begin{aligned}
\mathbb E({T_ A})&=L_1\left[1-\alpha(1-\alpha)+(1-\alpha)P_{0,0}+(1-\alpha)^2\frac{\left[1-P_{0,H}(2-P_{0,H}-P_{H,0})\right]}{\alpha+(1-\alpha)(P_{0,H}+P_{H,0})}\right.\\
&\left.+\frac{(1-\alpha)^3}{\alpha}\frac{P_{H,0}}{\alpha+(1-\alpha)(P_{0,H}+P_{H,0})}\right]+L^*_1P_{0,H}(1-\alpha)\\
&\times \left[\alpha +(1-\alpha)\frac{2+(1-P_{0,H}-P_{H,0})\alpha}{\alpha+(1-\alpha)(P_{0,H}+P_{H,0})}+\frac{(1-\alpha)^2}{\alpha}\frac{1}{\alpha+(1-\alpha)(P_{0,H}+P_{H,0})}\right], 
\end{aligned}
\end{equation}
where the probabilities $P_{0,0}$, $P_{0,H}$, $P_{H,0}$, $P_{H,H}$ and the values of $L_1$, $L_1^*$ have been obtained in Theorem \ref{teor1} and Proposition \ref{induz}, respectively.
\end{theorem}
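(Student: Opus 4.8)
The plan is to condition on the random number $M$ of completed phases and to use the stated mutual independence of $M$ and the cycle durations. Since $M$ has a geometric distribution with parameter $\alpha$, and since $L_n$ from Proposition \ref{induz} is precisely the conditional expected path length given $M=n$, the expected absorption time decomposes as a weighted average,
\begin{equation*}
\mathbb E(T_A)=\sum_{n\ge 1}\mathbb P\{M=n\}\,L_n=\alpha\sum_{n\ge 1}(1-\alpha)^{n-1}L_n .
\end{equation*}
Thus the whole proof reduces to summing this series and rewriting the outcome in the claimed grouped form; the ingredients $L_1,L_1^*$ and the phase probabilities are already supplied by Theorem \ref{teor1} and Proposition \ref{induz}.

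Rather than substituting the cumbersome $Q^{(i,m)}_{u,v}$ expression of \eqref{th} term by term (which is the route that reproduces the exact grouping of the statement), I would first simplify by returning to the underlying phase-by-phase decomposition. Setting $d_k:=L_1P^{(k-1)}_{0,0}+L_1^*P^{(k-1)}_{0,H}$ for the expected duration of the $k$-th phase, with $\mathbf P^{(0)}=\mathbf I$, one has $L_n=\sum_{k=1}^n d_k$. Interchanging the order of summation gives
\begin{equation*}
\mathbb E(T_A)=\alpha\sum_{k\ge 1}d_k\sum_{n\ge k}(1-\alpha)^{n-1}=\sum_{k\ge 1}(1-\alpha)^{k-1}d_k=\sum_{j\ge 0}(1-\alpha)^{j}\bigl[L_1P^{(j)}_{0,0}+L_1^*P^{(j)}_{0,H}\bigr],
\end{equation*}
so the problem collapses to a single geometric-type series in $j$.

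Next I would insert the explicit matrix powers \eqref{eqPj2}. Because $P^{(j)}_{0,0}$ and $P^{(j)}_{0,H}$ are affine combinations of $1$ and $\vartheta^{\,j}$, the series splits into the two elementary sums $\sum_{j\ge0}(1-\alpha)^j=1/\alpha$ and $\sum_{j\ge0}[(1-\alpha)\vartheta]^j=1/[1-(1-\alpha)\vartheta]$, the latter converging since $|\vartheta|<1$ when $0<P_{u,v}<1$. The decisive simplification is the identity
\begin{equation*}
D:=1-(1-\alpha)\vartheta=\alpha+(1-\alpha)(P_{0,H}+P_{H,0}),
\end{equation*}
which follows from $\vartheta=1-P_{0,H}-P_{H,0}$ (cf.\ \eqref{defvartheta}) and is exactly the denominator that recurs throughout the statement.

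Carrying out the two summations then expresses $\mathbb E(T_A)$ as an explicit linear combination of $L_1$ and $L_1^*$ with coefficients rational in $\alpha$, $P_{0,H}$, $P_{H,0}$ and $D$. The final step, and the only genuinely delicate one, is to rearrange this compact expression into the precise term-separated form of the statement, using $P_{0,0}=1-P_{0,H}$, $P_{H,H}=1-P_{H,0}$ and $D-\alpha=(1-\alpha)(P_{0,H}+P_{H,0})$. I expect this algebraic bookkeeping to be the main obstacle: the stated form deliberately isolates the low-order phase contributions (the $n=1$ and $n=2$ terms) from the geometric tail, so matching it coefficient by coefficient requires splitting off the first summands before summing and tracking several cancellations, which is where normalization or sign slips are easiest to introduce.
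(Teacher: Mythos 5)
Your starting identity $\mathbb E(T_A)=\alpha\sum_{n\ge1}(1-\alpha)^{n-1}L_n$ is exactly the paper's, and from there your route is a genuine (and cleaner) variant: the paper substitutes the closed form \eqref{th} of $L_n$ with its $Q^{(i,m)}_{u,v}$ blocks and sums each block separately, whereas you telescope $L_n=\sum_{k=1}^n d_k$ with $d_k=L_1P^{(k-1)}_{0,0}+L_1^*P^{(k-1)}_{0,H}$ (precisely the recursion $L_n=L_{n-1}+L_1P^{(n-1)}_{0,0}+L_1^*P^{(n-1)}_{0,H}$ used inside the proof of Proposition \ref{induz}) and interchange the order of summation. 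Both routes then rely on the same ingredients, namely the spectral form \eqref{eqPj2} of the matrix powers and the identity $1-(1-\alpha)\vartheta=\alpha+(1-\alpha)(P_{0,H}+P_{H,0})=:D$, which you correctly isolate. Carried to the end, your computation gives the compact expression
\begin{equation*}
\mathbb E(T_A)=\frac{L_1}{P_{0,H}+P_{H,0}}\left[\frac{P_{H,0}}{\alpha}+\frac{P_{0,H}}{D}\right]+\frac{L_1^*\,P_{0,H}}{P_{0,H}+P_{H,0}}\left[\frac{1}{\alpha}-\frac{1}{D}\right].
\end{equation*}

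The one step you defer, the rearrangement into the term-separated form of the statement, is where you will get stuck, and not through any fault of yours: the displayed formula \eqref{timeabsorption} is not equal to the sum of the series. The coefficient of $L_1^*$ in \eqref{timeabsorption} agrees with the compact form above, but the coefficient of $L_1$ differs from it by exactly $\alpha(1-\alpha)$. The discrepancy is traceable to the expansion of the $n=2$ term in the paper's own proof: $\alpha(1-\alpha)L_2=\alpha(1-\alpha)\bigl[(1+P_{0,0})L_1+P_{0,H}L_1^*\bigr]$ contributes a summand $\alpha(1-\alpha)L_1$ that is missing from the paper's intermediate display \eqref{14}, so the non-series part of the $L_1$ coefficient comes out as $\alpha+\alpha(1-\alpha)P_{0,0}+(1+P_{0,0})(1-\alpha)^2=1-\alpha(1-\alpha)+(1-\alpha)P_{0,0}$ instead of the correct $\alpha+\alpha(1-\alpha)(1+P_{0,0})+(1-\alpha)^2(1+P_{0,0})=1+(1-\alpha)P_{0,0}$; the leading $1-\alpha(1-\alpha)$ in the statement should therefore read $1$. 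A numerical spot check (e.g.\ $\alpha=1/2$, $P_{0,H}=0.3$, $P_{H,0}=0.4$) confirms this: direct summation of $\alpha\sum_n(1-\alpha)^{n-1}L_n$ and your compact formula both give $1.647\,L_1+0.353\,L_1^*$, while \eqref{timeabsorption} gives $1.397\,L_1+0.353\,L_1^*$. Your method is sound; just do not spend effort forcing agreement with the stated grouping, since the matching you anticipate as ``the main obstacle'' is impossible as written.
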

\begin{proof}
Since 
\begin{equation*} 
\mathbb E({T_ A})=\alpha\sum_{n=1}^{+\infty}L_n(1-\alpha)^{n-1},
\end{equation*}
recalling the expressions of $L_2$ and $L_n$,  we have
\begin{equation*}
\begin{aligned}
\mathbb E({T_A})&=\alpha L_1+\alpha(1-\alpha)L_1P_{0,0}+\alpha(1-\alpha)L^*_1P_{0,H}+ L_1(1+P_{0,0})(1-\alpha)^2\\
&+\alpha L_1P_{0,0}^{(2)}\sum_{n\ge 3}(1-\alpha)^{n-1}Q^{(0, n-3)}_{0,0}+\alpha L_1P_{0,H}^{(2)}\sum_{n\ge 4}(1-\alpha)^{n-1}Q^{(1, n-3)}_{H,0}\\
&+L^*_1P_{0,H}(1-\alpha)^2+L^*_1\alpha P_{0,H}^{(2)}\sum_{n\ge 3}(1-\alpha)^{n-1}Q^{(0, n-3)}_{H,H}+\\
&+L^*_1\alpha P_{0,0}^{(2)}\sum_{n\ge 4} (1-\alpha)^{n-1}Q^{(1, n-3)}_{0,H}.
\end{aligned}
\end{equation*}
Hence,
\begin{equation}\label{14}
\begin{aligned}
\mathbb E({T_ A})&=L_1\left[\alpha +\alpha(1-\alpha)P_{0,0}+(1+P_{0,0})(1-\alpha)^2\right]\\
&+L^*_1\left[\alpha(1-\alpha)P_{0,H}+P_{0,H}(1-\alpha)^2\right]\\
&+L_1P_{0,0}^{(2)}\sum_{j=0}^{+\infty}(1-\alpha)^{j+2}P_{0,0}^{(j)}+L_1P_{0,H}^{(2)}\sum_{j=1}^{+\infty}(1-\alpha)^{j+2}P_{H,0}^{(j)}\\
&+L^*_1P_{0,H}^{(2)}\sum_{j=0}^{+\infty}(1-\alpha)^{j+2}P_{H,H}^{(j)}+L^*_1P_{0,0}^{(2)}\sum_{j=1}^{+\infty}(1-\alpha)^{j+2}P_{0,H}^{(j)}.
\end{aligned}
\end{equation}
Finally, recalling Eq. (\ref{eqPj2}), from Eq.\ \eqref{14} we obtain
\begin{equation*}
\begin{aligned}
\mathbb E({T_ A})&=L_1\left[\alpha +\alpha(1-\alpha)P_{0,0}+(1+P_{0,0})(1-\alpha)^2\right]\\
&+L^*_1\left[\alpha(1-\alpha)P_{0,H}+P_{0,H}(1-\alpha)^2\right]\\
&+L_1P_{0,0}^{(2)}\frac{(1-\alpha)^2}{P_{0,H}+P_{H,0}}\left[\frac{P_{H,0}}{\alpha}+\frac{P_{0,H}}{1-\vartheta(1-\alpha)}\right]\\
&+L_1P_{0,H}^{(2)}\frac{(1-\alpha)^3P_{H,0}}{P_{0,H}+P_{H,0}}\left[\frac{1}{\alpha}-\frac{\vartheta}{1-\vartheta(1-\alpha)}\right]\\
&+L^*_1P_{0,H}^{(2)}\frac{(1-\alpha)^2}{P_{0,H}+P_{H,0}}\left[\frac{P_{0,H}}{\alpha}+\frac{P_{H,0}}{1-\vartheta(1-\alpha)}\right]\\
&+L^*_1P_{0,0}^{(2)}\frac{(1-\alpha)^3P_{0,H}}{P_{0,H}+P_{H,0}}\left[\frac{1}{\alpha}-\frac{\vartheta}{1-\vartheta(1-\alpha)}\right],
\end{aligned}
\end{equation*}
so that the thesis immediately follows.
\end{proof}
%
\begin{figure}[t]
	\centering
	{\includegraphics[scale=0.8]{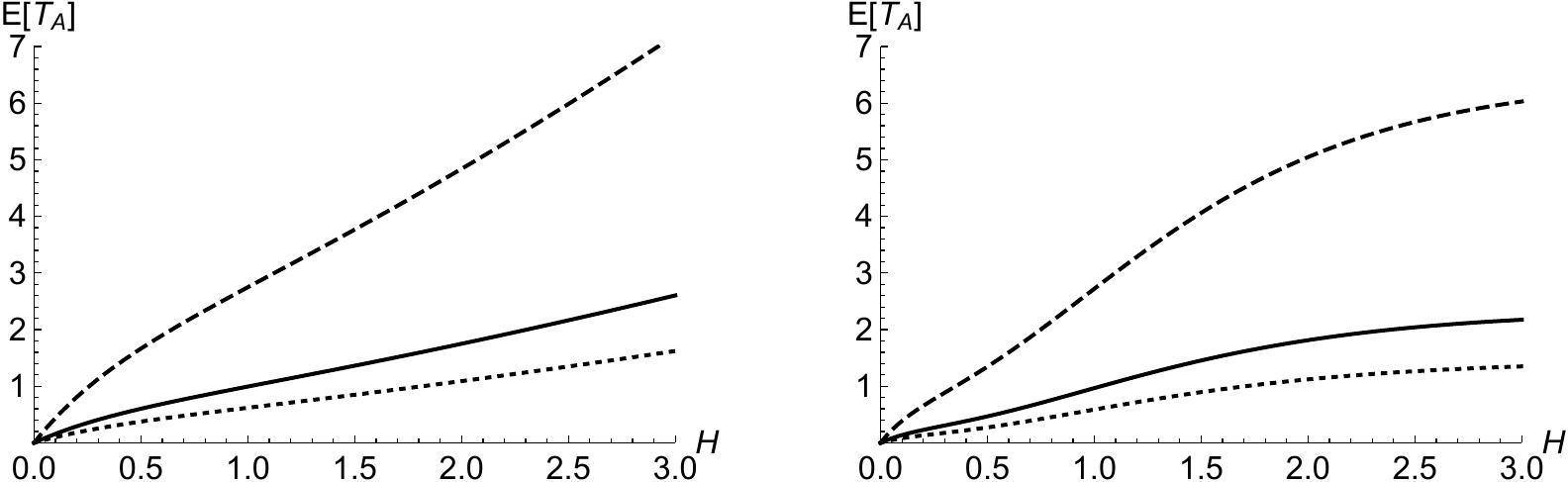}}
	\caption{$\mathbb E(T_ A)$ as a function of $H$ for $\alpha=0.2$ (dashed line), $\alpha=0.5$ (solid line), $\alpha=0.8$ (dotted line), with 
	$\lambda=\mu=0.5$ (left) and  $(\lambda,\mu)=(2,0.5)$ (right).}
	\label{fig:FiguraMediaTa4}
\end{figure}

The mean absorbing time to one of the boundaries, 
$\mathbb E(T_A)$, is plotted for some choices of the parameters in Figures \ref{fig:FiguraMediaTa} and \ref{fig:FiguraMediaTa4}. 
As we expect, it is increasing as the upper endpoint $H$ increases. 
Moreover, it decreases when the switching    
probability $\alpha$ increases. In particular, $\mathbb E(T_A)$ diverges 
when $\alpha\rightarrow 0^{+}$, whereas, if $\alpha\rightarrow 1$, it approaches the value $c_{0,0} P_{0,0}+c_{0,H} P_{0,H}\equiv L_1$. The latter  is the expected length 
of the sample path leading to the absorption in the origin or in the level $H$ in the case $M=1$, i.e.\ when 
the absorption occurs at the first hitting of one of the boundaries. 
Moreover, $\mathbb E(T_A)$  is non-monotonic in $\lambda>0$; it tends to zero as $\lambda$ goes to $+\infty$, whereas it tends to a finite value when 
$\lambda$ approaches $0^+$.
\par
In addition, similarly as in Remark \ref{eq:Remark5.2}, with reference to the stopping time introduced in (\ref{defTA}), we 
denote by $\widetilde T_A$ the corresponding stopping time for the transformed process $\widetilde X_c(t)$. 
It is not hard to see that, under scaling (\ref{eq:scaling}), making use of (\ref{timeabsorption})  one can obtain 
$\lim_{\sc SC}\mathbb{E}(\widetilde T_A)=0$. Again, this  is in agreement with the analogous natural result 
for the limiting Wiener process in the presence of two boundaries of the same nature. 
%
\section{Conclusions}
%
The study of diffusion processes constrained by boundaries has a long history. 
The approach usually adopted is based on the resolution of partial differential equations
with suitable boundary conditions. On the contrary, stochastic processes describing 
finite velocity random motions in the presence of boundaries have not been studied extensively. 
Over the years, some results have been obtained on the telegraph process subject to reflecting or 
absorbing boundaries, whereas the case of  
hard reflection at the boundaries (with random switching to full absorption) 
seems, to our knowledge, quite new. Along the line of a previous paper \cite{DiCrescenzo} 
concerning the one-dimensional telegraph process under a single boundary, 
in the present contribution we investigate the case of a random motion confined by two boundaries 
of the above described type. 
The main results obtained here are related to the expected values of the renewal cycles and 
of the time till the absorption. 
The novelty of the adopted approach, which is based on the study of first-passage times of a suitable 
compound Poisson process, is a strength of the paper. 
%
\subsection*{Acknowledgements}
This work is partially supported by the group GNCS of INdAM (Istituto Nazionale di Alta Matematica), 
and by MIUR - PRIN 2017, project `Stochastic Models for Complex Systems', no. 2017JFFHSH. 
We thank an anonymous referee for useful comments that allowed us to improve the paper. 
%

\end{document}